\begin{document}

\newtheorem{theorem}{Theorem}[section]
\newtheorem{result}[theorem]{Result}
\newtheorem{fact}[theorem]{Fact}
\newtheorem{proposition}[theorem]{Proposition}
\newtheorem{definition}[theorem]{Definition}
\newtheorem{lemma}[theorem]{Lemma}
\newtheorem{remark}[theorem]{Remark}
\newtheorem{corollary}[theorem]{Corollary}
\newtheorem{facts}[theorem]{Facts}
\newtheorem{props}[theorem]{Properties}

\newtheorem{ex}[theorem]{Example}

\newcommand{\notes} {\noindent \textbf{Notes.  }}
\newcommand{\note} {\noindent \textbf{Note.  }}
\newcommand{\defn} {\noindent \textbf{Definition.  }}
\newcommand{\defns} {\noindent \textbf{Definitions.  }}
\newcommand{\x}{{\bf x}}
\newcommand{\z}{{\bf z}}
\newcommand{\B}{{\bf b}}
\newcommand{\V}{{\bf v}}
\newcommand{\T}{\mathbb{T}}
\newcommand{\Z}{\mathbb{Z}}
\newcommand{\Hp}{\mathbb{H}}
\newcommand{\D}{\mathbb{D}}
\newcommand{\R}{\mathbb{R}}
\newcommand{\C}{\mathbb{C}}
\newcommand{\N}{\mathbb{N}}
\newcommand{\dt}{{\mathrm{det }\;}}
 \newcommand{\adj}{{\mathrm{adj}\;}}
 \newcommand{\0}{{\bf O}}
 \newcommand{\av}{\arrowvert}
 \newcommand{\zbar}{\overline{z}}

\newcommand{\ds}{\displaystyle}
\numberwithin{equation}{section}

\title{Quasiregular mappings of polynomial type in $\R^{2}$}

\author{Alastair Fletcher and Dan Goodman \footnote{The first author is supported by EPSRC grant EP/G050120/1.} }

\maketitle

\begin{abstract}

Complex dynamics deals with the iteration of holomorphic functions. As is well-known, the first functions to be studied which gave non-trivial dynamics were quadratic polynomials,
which produced beautiful computer generated pictures of Julia sets and the Mandelbrot set. In the same spirit, this article aims to study the dynamics of the simplest non-trivial quasiregular mappings. These are mappings in $\R^{2}$ which are a composition of a quadratic polynomial and an affine stretch. 

MSC2010: 30C65 (Primary), 30D05, 37F10, 37F45 (Secondary)

\end{abstract}

\begin{figure}
  \caption{The set of non-escaping points for $f(z) = (h_{0.8,0}(z))^{2}-0.21-0.78i$, a Douady dragon.}\label{0.8K_-0.21-0.78icfig}
\begin{center}
    \includegraphics{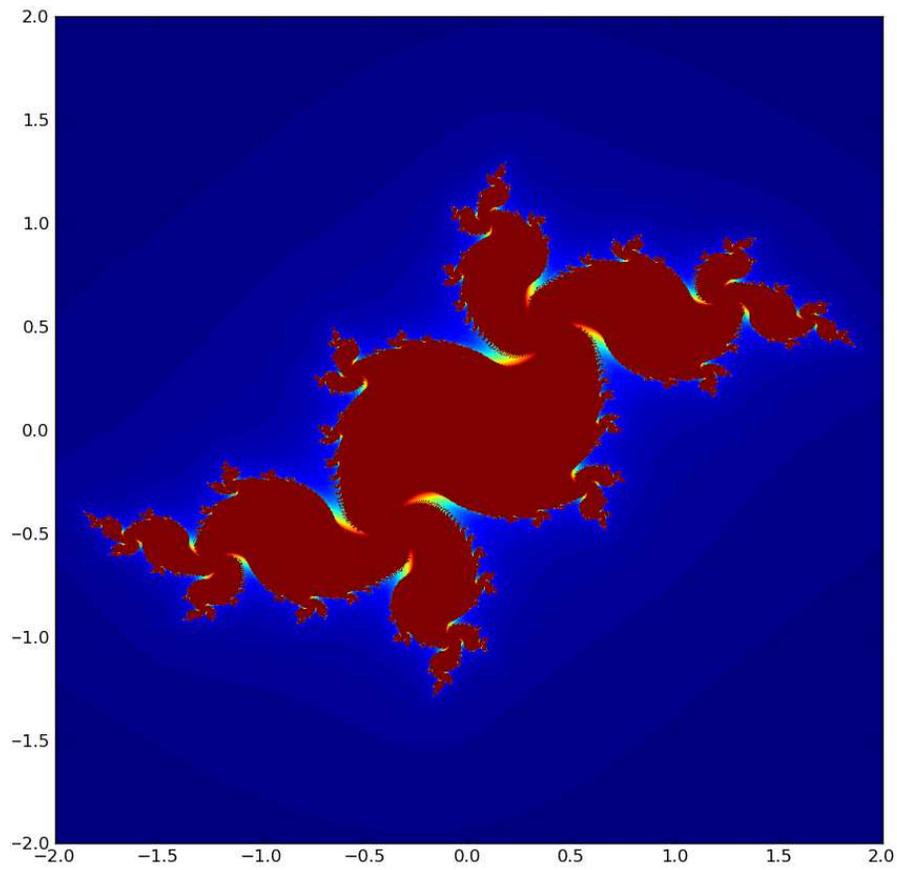}
\end{center}
\end{figure}

\section{Introduction}

The field of complex dynamics, the study of iteration of analytic functions in the plane, goes back nearly a century to Fatou and Julia. However, there has been a surge of recent interest in the field, following on from computer generated pictures of Julia sets and the Mandelbrot set and led by the work of Douady and Hubbard, e.g. \cite{DH}. This illustrated how an iterative system with a very simple description, namely a quadratic polynomial, could have very complicated behaviour. There are several excellent introductions to the theory, for example \cite{Beardon, Milnor, MNTU}.

More recently, the iteration of quasiregular mappings in $\R^{n}$ has been studied, motivated by the fact that several key tools in complex dynamics have analogues for quasiregular mappings, for example Rickman's theorem generalizing Picard's theorem, and Montel's theorem. In fact, direct analogues of the Fatou and Julia sets can be defined for a special class of quasiregular mappings, all of whose iterates have distortion bounded by some fixed number. These are the so-called uniformly quasiregular mappings, introduced in \cite{IM} and studied in a number of papers. We restrict ourselves to mentioning \cite{HMM} and the interested reader can find further references contained therein.

For general quasiregular mappings, it is no longer possible to define the Fatou set for the simple reason that the iterates will have no common bound on the distortion (see the standard references \cite{Rickman,IM2} for the theory of quasiregular mappings). It is however possible to define the escaping set $I(f)$, the set of those points which iterate to infinity, which is a key object in complex dynamics. It is well known that for an analytic function, the boundary of $I(f)$ coincides with the Julia set. It is therefore natural to consider $\partial I(f)$ for quasiregular mappings and see to what extent it can be considered an analogue of the Julia set. Recent papers in this direction include \cite{Bergweiler,BE,BFLM}.

Of particular relevance here is \cite{FN}, where it was shown that for quasiregular mappings of polynomial type, as long as the degree of the mapping is larger than the distortion, $\partial I(f)$ is an infinite, completely invariant perfect set. Further, the family of iterates of $f$ is not equicontinuous at any point of $\partial I(f)$.

In this paper, in the same spirit as the study of iteration of quadratic polynomials, we aim to analyze the boundary of the escaping set for the simplest quasiregular mappings with non-trivial dynamics; namely the composition of quadratic polynomials and quasiconformal mappings with constant complex dilatation. 

The outline of the paper is as follows. In section 2, we will cover some preliminary definitions and results. In section 3, a canonical form for the type of functions we are interested in will be derived. 
In section 4, generalizations of results in \cite{FN}, on the boundary of the escaping set, to our functions will be obtained. 
Section 5 deals with the connectedness or not of $\partial I(f)$, and section 6 introduces a generalization of the Mandelbrot set and covers some of its properties. 

The images in this paper were computed using the Python programming language and the 
NumPy (Numerical Python) extension package. The code is available on the 
second authors' website
\url{http://thesamovar.net/mathematics/qrdynamics}.

The authors would like to thank Dan Nicks for interesting and stimulating discussions.

\section{Preliminaries}

We first collect some definitions and results that we will use. A quasiregular mapping $f:G \rightarrow \R^{n}$ from a domain $G \subseteq \R^{n}$ is called quasiregular if $f$ belongs to the Sobolev space $W^{1}_{n, loc}(G)$ and there exists $K \in [1, \infty)$ such that 
\begin{equation}
\label{eq2.1}
\av f'(x) \av ^{n} \leq K J_{f}(x)
\end{equation}
almost everywhere in $G$. Here $J_{f}(x)$ denotes the Jacobian determinant of $f$ at $x \in G$. The smallest constant $K \geq 1$ for which (\ref{eq2.1}) holds is called the outer distortion $K_{O}(f)$ of $f$. If $f$ is quasiregular, then we also have
\begin{equation}
\label{eq2.2}
J_{f}(x) \leq K' \inf _{\av h \av =1} \av f'(x) h \av ^{n}
\end{equation}
almost everywhere in $G$ for some $K' \in[1, \infty)$. The smallest constant $K' \geq 1$ for which (\ref{eq2.2}) holds is called the inner distortion $K_{I}(f)$ of $f$. The maximal distortion $K=K(f)$ of $f$ is the larger of $K_{O}(f)$ and $K_{I}(f)$. In dimension $2$, we have $K_{O}(f) = K_{I}(f)$.

The degree of a mapping is the maximal number of pre-images and is in direct analogue with the degree of a polynomial. A quasiregular mapping is said to be of polynomial type if its degree is uniformly bounded at every point, or equivalently, if $\av f(x) \av \rightarrow \infty$ as $\av x \av \rightarrow \infty$. 

\begin{theorem}[\cite{FN}]
\label{FN}
Let $n \geq 2$ and $f:\R^{n} \rightarrow \R^{n}$ be $K$-quasiregular of polynomial type. If the degree of $f$ is greater than the inner distortion $K_{I}$, then $I(f)$ is a non-empty open set and $\partial I(f)$ is perfect.
\end{theorem}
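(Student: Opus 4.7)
The plan is to address the three assertions in order: that $I(f)$ is non-empty, that it is open, and that $\partial I(f)$ is perfect. The first two are routine consequences of the polynomial-type hypothesis. Since $|f(x)| \to \infty$ as $|x| \to \infty$, I can choose $R > 0$ so that $|f(x)| \geq 2|x|$ whenever $|x| \geq R$; iterating gives $|f^{k}(x)| \geq 2^{k}|x|$, so the open neighborhood of infinity $U_{R} = \{x : |x| > R\}$ lies in $I(f)$. For openness, I would rewrite $I(f) = \bigcup_{k \geq 0} (f^{k})^{-1}(U_{R})$, which is open by continuity of each iterate.

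Next I would verify that $\partial I(f)$ is non-empty (so that the perfectness assertion has content). Since $f$ is a continuous proper self-map of $\R^{n}$ of degree $d \geq 2$, topological degree theory gives a fixed point $p \in \R^{n}$, and $p \notin I(f)$; combined with $I(f) \neq \emptyset$, this forces $\partial I(f) \neq \emptyset$. Moreover $\partial I(f)$ is automatically closed. The substantive content is therefore to rule out isolated points.

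The core step is the no-isolated-points argument. Suppose for contradiction that some $z_{0} \in \partial I(f)$ is isolated, and pick a small ball $B = B(z_{0}, r)$ with $\partial I(f) \cap B = \{z_{0}\}$. Because $n \geq 2$, the punctured ball $B \setminus \{z_{0}\}$ is connected, and it avoids $\partial I(f)$, so it lies entirely in $I(f)$ or entirely in the interior of $\R^{n} \setminus I(f)$. I would then study the family of iterates $\mathcal{F} = \{f^{k}\}_{k \geq 1}$ on $B$, viewed as mappings into the one-point compactification $\R^{n} \cup \{\infty\}$. In either dichotomy, $\mathcal{F}$ restricted to $B \setminus \{z_{0}\}$ should be normal, with all limit functions being either $\infty$ (in the escaping case) or locally uniformly bounded (in the non-escaping case). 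The plan is then to invoke a removable-singularity result for normal families of quasiregular mappings to extend normality across $z_{0}$, contradicting the fact that $z_{0} \in \partial I(f)$ forces non-normality at $z_{0}$.

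The main obstacle, and where the hypothesis $d > K_{I}$ is essential, is in establishing that $\mathcal{F}$ really is normal on $B \setminus \{z_{0}\}$. For this I would use the quasiregular analogue of Montel's theorem due to Miniowitz, together with a Rickman-type value-distribution estimate: the iterate $f^{k}$ has degree $d^{k}$ while its inner distortion satisfies $K_{I}(f^{k}) \leq K_{I}^{k}$, so the ratio $d^{k}/K_{I}^{k} \to \infty$ under the hypothesis, which forces the image of $f^{k}$ on any fixed neighborhood to either cover most of $\R^{n}$ (excluding only a set whose capacity is controlled) or to collapse to something bounded. This asymmetry between degree growth and distortion growth is exactly what produces the dichotomy above and what the hypothesis $d > K_{I}$ is designed to exploit. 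The technical hurdle is to extract a clean normal-family statement from these value-distribution facts, and I would expect this to occupy most of the proof; the removable-singularity step afterwards is comparatively standard, using that the single point $\{z_{0}\}$ has zero conformal capacity in $\R^{n}$ for $n \geq 2$.
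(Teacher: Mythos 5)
Your opening step contains the fatal error. From ``$|f(x)|\to\infty$ as $|x|\to\infty$'' you cannot conclude that $|f(x)|\geq 2|x|$ for large $|x|$: the winding map $f(re^{i\theta})=(r,2\theta)$ of Example \ref{ex1} in this very paper is quasiregular of polynomial type with degree $2$, satisfies $|f(z)|=|z|\to\infty$, and has $I(f)=\emptyset$. The hypothesis $d>K_{I}$ is needed precisely here, for non-emptiness: the growth theorem for quasiregular mappings of polynomial type gives $|f(x)|\geq C|x|^{\alpha}$ near infinity with $\alpha=(d/K_{I})^{1/(n-1)}$, and only when $d>K_{I}$ is $\alpha>1$, which yields the expanding neighbourhood of infinity. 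You have labelled the first two assertions ``routine consequences of the polynomial-type hypothesis'' and shifted the entire burden of $d>K_{I}$ onto the perfectness step; that misreads where the hypothesis does its work.

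The perfectness argument you sketch is also not workable as stated. Miniowitz's Montel theorem requires a \emph{uniform} bound on the distortion of the family, but $K_{I}(f^{k})$ can grow like $K_{I}^{k}$, so the iterates $\{f^{k}\}$ do not form such a family; this is exactly the obstruction, noted in the introduction of the paper, that prevents one from defining Fatou sets for general (non-uniformly) quasiregular maps, and no clean normal-family statement of the kind you hope for is available. The actual argument (cited here from \cite{FN}, and mirrored in the paper's own proof of Theorem \ref{FNext}) is local and elementary: if $z_{0}$ is an isolated point of $I(f)^{c}$, complete invariance forces $I(f)^{c}=\{z_{0}\}$, hence $f(z_{0})=z_{0}$ and $f^{-1}(z_{0})=\{z_{0}\}$, so the local index satisfies $i(z_{0},f)=d$. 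The local H\"older estimate for quasiregular mappings at a point of local index $i$ then gives $|f(x)-z_{0}|\leq C|x-z_{0}|^{(d/K_{I})^{1/(n-1)}}$ with exponent strictly greater than $1$ (this is the second place $d>K_{I}$ enters), so a punctured neighbourhood of $z_{0}$ is attracted to $z_{0}$ rather than escaping --- a contradiction. Your dichotomy on the punctured ball is fine as far as it goes (and the branch where $B\setminus\{z_{0}\}$ lies in the interior of the complement is immediately contradictory without any normality argument), but you are missing both the identification of $z_{0}$ as a fixed point of maximal local index and the index--distortion inequality that finishes the proof.
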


Denote by $B(f)$ the branch set of $f$, that is, the set where $f$ is not locally injective.
A quasiconformal mapping is an injective quasiregular mapping. The following result says that in dimension $2$, a quasiregular mapping can be factorized into two mappings, one of which deals with the distortion and one which deals with the branch points.

\begin{theorem}[Stoilow factorization, see for example \cite{IM2} p.254]
\label{Stoilow}
Let $f:\C \rightarrow \C$ be a quasiregular mapping. Then there exists an analytic function $g$ and a quasiconformal mapping $h$ such that $f = g \circ h$.
\end{theorem}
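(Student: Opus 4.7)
The plan is to use the measurable Riemann mapping theorem to produce the quasiconformal factor, then verify that what remains is holomorphic via Weyl's lemma.

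First, I would extract the Beltrami coefficient of $f$. Since $f \in W^{1,2}_{loc}(\C)$ has partial derivatives $f_z$ and $f_{\bar z}$ almost everywhere, and since the distortion inequality (\ref{eq2.1}) with $n=2$ translates to $|f_{\bar z}| \leq k |f_z|$ a.e. for some $k = (K-1)/(K+1) < 1$, I can define
\[
\mu_f(z) = \begin{cases} f_{\bar z}(z)/f_z(z) & \text{if } f_z(z) \neq 0, \\ 0 & \text{otherwise,} \end{cases}
\]
which is a measurable function on $\C$ with $\|\mu_f\|_\infty \leq k < 1$.

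Next, by the measurable Riemann mapping theorem, there exists a quasiconformal homeomorphism $h: \C \to \C$ whose complex dilatation $\mu_h$ equals $\mu_f$ almost everywhere. I then set $g = f \circ h^{-1}$ and claim $g$ is analytic. Since $h$ is a quasiconformal homeomorphism, $h^{-1}$ is also quasiconformal, so $g$ belongs to $W^{1,2}_{loc}(\C)$. To compute $\mu_g$, I would invoke the standard chain rule for Beltrami coefficients: if $F = f \circ h^{-1}$, then at $w = h(z)$,
\[
\mu_F(w) = \left( \frac{\mu_f(z) - \mu_h(z)}{1 - \overline{\mu_h(z)}\, \mu_f(z)} \right) \cdot \frac{\overline{h_z(z)}}{h_z(z)}
\]
at almost every point where the derivatives exist and $J_h > 0$; since $h$ is quasiconformal, $J_h > 0$ a.e. Because $\mu_f = \mu_h$ a.e., the numerator vanishes, so $\mu_g \equiv 0$ a.e.

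Finally, I would conclude that $g$ is holomorphic. Since $g \in W^{1,2}_{loc}$ satisfies $g_{\bar w} = 0$ in the distributional sense (this follows from $\mu_g = 0$ combined with $g_w \in L^2_{loc}$), Weyl's lemma gives that $g$ agrees almost everywhere with a holomorphic function, and since $g$ is already continuous (as the composition of a continuous quasiregular map with a homeomorphism), the equality is everywhere. This yields the factorization $f = g \circ h$ as desired.

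The main technical input is the measurable Riemann mapping theorem, which is the hard content; granted that, the remaining steps are essentially bookkeeping with the transformation law for Beltrami coefficients and an appeal to Weyl's lemma. A minor point of care is handling the branch set of $f$, where $f_z$ may vanish, but this is a measure-zero issue that does not affect the definition of $\mu_f$ or the calculation of $\mu_g$.
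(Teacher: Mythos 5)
The paper does not prove this statement at all: it is quoted as a known theorem with a reference to Iwaniec--Martin, so there is no in-paper argument to compare against. Your proof is, in outline, the standard one given in that reference (extract $\mu_f$, solve the Beltrami equation by the measurable Riemann mapping theorem to get $h$, set $g = f\circ h^{-1}$, show $\mu_g = 0$, invoke Weyl's lemma), and it is essentially correct. Two small remarks. First, the step you describe as bookkeeping is actually where the analytic work is hidden: to assert that $g = f\circ h^{-1}$ lies in $W^{1,2}_{loc}$ and that the pointwise chain rule for the Wirtinger derivatives holds almost everywhere, you need that quasiconformal maps are differentiable a.e.\ with $J_{h}>0$ a.e., satisfy Luzin's condition $N$ and $N^{-1}$, and preserve the Sobolev class under composition; these are nontrivial facts about quasiconformal mappings, though standard. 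The cleaner packaging is to quote that a composition of a quasiregular map with a quasiconformal map is quasiregular with the expected dilatation, conclude $g$ is $1$-quasiregular, and then use the fact that $1$-quasiregular maps are holomorphic (which is the Weyl-lemma step). Second, a cosmetic slip: in the transformation law for the Beltrami coefficient of $F = f\circ h^{-1}$ the unimodular factor should be $h_z/\overline{h_z}$ rather than $\overline{h_z}/h_z$; this is immaterial here since the numerator $\mu_f-\mu_h$ vanishes a.e.\ in any case.
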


Stoilow factorization tells us what the branch set of a quasiregular mapping in $\C$ can be.

\begin{corollary}
Let $f:\C \rightarrow \C$ be quasiregular. Then $B(f)$ is a discrete set of points. In particular, if $f$ is quasiregular of polynomial type,
then $B(f)$ is a finite set of points.
\end{corollary}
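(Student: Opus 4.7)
The plan is to reduce the statement to the classical fact about zeros of holomorphic derivatives via Stoilow factorization. By Theorem \ref{Stoilow}, write $f = g \circ h$ with $g$ analytic and $h$ quasiconformal. Since $h$ is a homeomorphism of $\C$, it is locally injective everywhere, and therefore $f$ fails to be locally injective at $x$ precisely when $g$ fails to be locally injective at $h(x)$. This gives the key identification
\begin{equation*}
B(f) = h^{-1}(B(g)).
\end{equation*}

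Next I would analyze $B(g)$. Since quasiregular mappings are by convention nonconstant, $g$ is a nonconstant analytic function on $\C$, and its branch set is exactly the zero set of $g'$. As $g'$ is holomorphic and not identically zero, its zeros form a discrete subset of $\C$. Because $h$ is a homeomorphism, preimages of discrete sets are discrete, so $B(f)$ is discrete. This proves the first assertion.

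For the polynomial type refinement, I would use that any quasiconformal self-map of $\C$ extends to a homeomorphism of $\widehat\C$ fixing $\infty$, so $|h(z)| \to \infty$ as $|z| \to \infty$ and equivalently $|h^{-1}(w)| \to \infty$ as $|w| \to \infty$. Composing with the hypothesis $|f(z)| \to \infty$ as $|z| \to \infty$ yields $|g(w)| = |f(h^{-1}(w))| \to \infty$ as $|w| \to \infty$. An entire function with this property is a polynomial, so $g'$ is a polynomial with only finitely many zeros. Hence $B(g)$ is finite, and $B(f) = h^{-1}(B(g))$ is finite as well.

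The only nontrivial step is the identification $B(f) = h^{-1}(B(g))$; everything else is essentially bookkeeping about holomorphic derivatives and the behaviour of quasiconformal mappings at infinity. I do not anticipate any serious obstacle, since the local injectivity argument is transparent once Stoilow factorization is in hand.
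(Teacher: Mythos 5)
Your proof is correct and follows exactly the route the paper intends: the corollary is stated as an immediate consequence of Stoilow factorization, with $B(f)=h^{-1}(B(g))$ reducing everything to the discreteness (resp.\ finiteness, in the polynomial-type case) of the critical set of the analytic factor $g$. The paper leaves these details implicit, so your write-up simply fills in the same argument.
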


\section{Canonical form}

It is well-known that every quadratic polynomial is linearly conjugate to $z^{2}+c$ for an appropriate $c \in \C$. We will find an analogous canonical form for compositions of quadratic polynomials and affine stretches.

Consider a quasiconformal mapping $h_{K,\theta}:\C \rightarrow \C$ which stretches by a factor $K >0$ in the direction $e^{i\theta}$. If $\theta =0$,
then $h_{K,0}(x+iy) = Kx+iy$. For general $\theta$, pre-compose $h_{K,0}$ by a rotation of $-\theta$ and post-compose by a rotation of $\theta$ to give the expression
\begin{equation*}
h_{K,\theta}(x+iy) = x(K\cos ^{2} \theta + \sin ^{2} \theta) + y(K-1)\sin \theta \cos \theta 
\end{equation*}
\begin{equation}
\label{eq3.4}
+ i \left [ x(K-1)\cos \theta \sin \theta + y (K \sin ^{2} \theta + \cos ^{2} \theta ) \right ]
\end{equation}
or
\begin{equation}
\label{eq3.4a}
h_{K,\theta}(z) = \left ( \frac{ K+1}{2} \right ) z + e^{2i \theta}\left ( \frac{ K-1}{2} \right )\zbar.
\end{equation}
Using the formula for complex dilatation (see \cite{FM}), we see that 
\begin{equation}
\label{eq3.3}
\mu _{h_{K,\theta}} = e^{2i\theta} \frac{K-1}{K+1},
\end{equation}
and so $\av \av \mu_{h_{K,\theta}} \av \av _{\infty} <1$ which means that $h_{K,\theta}$ is quasiconformal with constant complex dilatation. 
Solving the Beltrami equation $f_{\zbar} = \mu f_{z}$ (see \cite{FM}) with the complex dilatation of (\ref{eq3.3}) gives a quasiconformal map which is unique if we require the solution the fix three points.
Therefore given $K>0$ and $\theta \in[-\pi,\pi]$, the unique solution of the Beltrami equation with dilatation (\ref{eq3.3}) which fixes $0, \infty$ and $e^{i(\theta + \pi/2)}$ is the stretch given by (\ref{eq3.4}).

Given a stretch $h_{K,\theta}$, we will represent it as a point in $\C \setminus \{ 0 \}$, given by the point $Ke^{i\theta}$.
Note that a stretch of factor $1$ in any direction is just the identity.

\begin{proposition}
\label{canonicalform}
Let $f:\C \rightarrow \C $ be a composition of a quadratic polynomial and an affine stretch of the form (\ref{eq3.4}). Then $f$ is linearly conjugate to 
\begin{equation}
\label{eq3.8}
f_{K,\theta,C} := (h_{K,\theta})^{2}+C
\end{equation}
for some $Ke^{i\theta} \in \C \setminus \{ 0 \}$ and $C \in \C$.
Moreover, if we insist that $Ke^{i\theta} \in U$, where
\begin{equation}
\label{eq3.20}
U = \{ 0<\av z \av < 1, -\pi / 4 < \arg (z) \leq \pi / 4\} \cup \{ \av z \av > 1, -\pi / 4 < \arg (z) \leq \pi / 4\} \cup \{ 1 \},
\end{equation}
then such a representation is unique.
\end{proposition}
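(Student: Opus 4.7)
First, reduce to an affine stretch fixing the origin: since $h_{K_0,\theta_0}$ is $\R$-linear, any affine stretch of the form (\ref{eq3.4}) equals $h_{K_0,\theta_0}(\cdot) + D$ for some $D$, and the translation $D$ can be absorbed into the quadratic polynomial by replacing $p(w)$ with $p_1(w) = p(w+D)$. So we may assume $f = p \circ h_{K_0, \theta_0}$ with $p(w) = a w^2 + b w + c$, $a \neq 0$. Conjugating by $\phi(w) = \alpha w + \beta$ and using $\R$-linearity of $h_{K_0,\theta_0}$, we write $h_{K_0,\theta_0}(\alpha w + \beta) = H(w) + \delta$, where $H$ is the $\R$-linear part and $\delta = h_{K_0,\theta_0}(\beta)$. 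Expanding,
\[
\phi^{-1} f \phi(w) = \frac{a}{\alpha} H(w)^2 + \frac{2a\delta + b}{\alpha} H(w) + \frac{a\delta^2 + b\delta + c - \beta}{\alpha}.
\]

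For existence, kill the linear term by choosing $\beta = h_{K_0,\theta_0}^{-1}(-b/(2a))$, which is well-defined since $h_{K_0,\theta_0}$ is a bijection of $\C$. To normalize the quadratic term to $h_{K,\theta}(w)^2$, choose $\alpha = r^2/a$ with $r > 0$, so that $a\alpha = r^2 \in \R_{>0}$; a direct computation then gives $(a/\alpha) H(w)^2 = (r \, h_{K_0, \theta_0 + \arg a}(w))^2$. Matching with $h_{K,\theta}(w)^2$ produces the equations $K+1 = r(K_0+1)$ and $(K-1)e^{2i\theta} = r(K_0-1)e^{2i(\theta_0+\arg a)}$, whose solutions are exactly $(K, r) \in \{(K_0, 1), (1/K_0, 1/K_0)\}$ (coinciding when $K_0 = 1$). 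This gives two candidate values of $K e^{i\theta}$, establishing existence.

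For uniqueness, run the same analysis starting from $f_{K,\theta,C}$ (so $a=1$, $b=0$). Killing the linear term forces $\delta = h_{K,\theta}(\beta) = 0$, so $\beta = 0$; and the coefficient of $w$ in the normalized stretch $\sqrt{1/\alpha}\cdot H(w)$ being real and positive forces $\alpha = s^2 \in \R_{>0}$ (after fixing the square-root branch). Writing $\alpha = s^2$ and matching coefficients gives either $(K',\theta',C') = (K,\theta,C)$ (from $s = 1$) or, when $K \neq 1$, $(K',\theta',C') = (1/K, \theta+\pi/2, K^2 C)$ (from $s = 1/K$). Thus on the parameter $K e^{i\theta}$ the linear-conjugacy equivalence is the $\Z/4\Z$-action generated by $z \mapsto i/\bar z$, whose square $z \mapsto -z$ realises the $\theta \mapsto \theta + \pi$ ambiguity. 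The four orbit points, whose arguments are spaced $\pi/2$ apart, meet the arc $(-\pi/4, \pi/4]$ in exactly one point; when $|z| \neq 1$ this point lies in one of the two sectors of $U$, while the entire unit circle (corresponding to $K = 1$, where all $h_{1,\theta}$ are the identity) is represented by the single point $\{1\} \in U$. So $K e^{i\theta}$, and then $C$, is uniquely determined. The main technical step throughout is the sign and branch tracking that forces $\alpha \in \R_{>0}$ in the uniqueness argument; this is what pins down the $\Z/4\Z$ symmetry and dictates the two-sectors-plus-a-point shape of $U$.
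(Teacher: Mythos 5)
Your proof is correct and follows essentially the same route as the paper: reduce to $f = p\circ h$, use the composition rules $h(Az)=Ah_{K,\theta-\arg A}(z)$ and $h(z+B)=h(z)+h(B)$ to normalize the leading coefficient and kill the linear term via $\beta = h^{-1}(-b/(2a))$, and then identify the residual symmetries $\theta\mapsto\theta+\pi$ and $(K,\theta)\mapsto(1/K,\theta+\pi/2)$ with $C\mapsto K^2C$, for which $U$ is a fundamental domain. If anything, your uniqueness half is more complete than the paper's, which merely asserts ``there are no other such symmetries,'' whereas you derive $\beta=0$ and $\alpha\in\R_{>0}$ from matching homogeneous components and so actually prove the symmetry group is the $\Z/4\Z$ you describe.
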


\begin{proof}
Let $g(z) = az^{2}+bz+c$ be a quadratic polynomial, where $a,b,c \in \C$ with $a \neq 0$, 
let $h=h_{M,\phi}$ for $M>0$ and $\phi \in [-\pi,\pi ]$, and write $f(z) = g(h(z))$. 
We need to know how $h$ behaves under pre-composition by translations and dilations.
Let $\alpha( z) = Az$ for $A \in \C \setminus \{ 0 \}$.
Then using (\ref{eq3.4a}),
\begin{equation*}
h(\alpha(z)) = h(Az) = \left ( \frac{ M+1}{2} \right ) Az + e^{2i \theta}\left ( \frac{ M-1}{2} \right )\overline{A}\zbar
\end{equation*}
\begin{equation}
\label{eq3.6}
= A \left ( \left ( \frac{ M+1}{2} \right ) z + e^{2i (\theta- arg(A))}\left ( \frac{ M-1}{2} \right )\zbar \right ) = Ah_{M,\theta - arg(A)}(z).
\end{equation}
Let $\beta(z) = z + B$ for $B \in \C$.
Again using (\ref{eq3.4a}), and noting that $h$ is $\R$-linear,
\begin{equation}
\label{eq3.7}
h(\beta(z)) = h(z) + h(B).
\end{equation}
Using (\ref{eq3.6}) with $A=1/a$,
\begin{equation*}
\alpha^{-1} \circ f \circ\alpha(z) = a \left ( a h_{M,\phi}(z/a)^{2} + b h_{M,\phi}(z/a) +c \right )
\end{equation*}
\begin{equation*}
= h_{M,\phi + arg(a)}(z)^{2} + b h_{M,\phi + arg(a)}(z) +ac
= \left ( h_{M,\phi + arg(a)}(z) + \frac{b}{2} \right )^{2} +ac - \frac{b^{2}}{4}.
\end{equation*}
Applying (\ref{eq3.7}) with $B = h^{-1}_{M,\phi + \arg(a)}(-b/2)$,
\begin{equation*}
\beta^{-1} \circ \alpha^{-1} \circ f \circ \alpha \circ \beta (z)
= h_{M,\phi + arg(a)}^{2}(z) + ac - \frac{b^{2}}{4} - h^{-1}_{M,\phi + arg(a)}(-b/2).
\end{equation*}
Therefore $f$ is linearly conjugate to (\ref{eq3.8}) with $K=M$, $\theta = \phi + \arg(a)$ and $C = ac - b^{2}/4 - h^{-1}_{K,\phi + arg(a)}(-b/2)$. 

For the uniqueness, 
we note that the choice of $K>0$ and $\theta \in [-\pi ,\pi ]$ for a given constant complex dilatation (\ref{eq3.3}) is not unique. There are the obvious symmetries
$(\theta \mapsto \theta + \pi)$ and $(K \mapsto 1/K, \theta \mapsto \theta + \pi /2)$. These correspond to the facts that a stretch in a direction $\theta$ is the same as stretching in the direction $-\theta$, and that stretching a factor $K$ in the direction $\theta$ is the same, up to post-composing by a conformal dilation, as stretching by a factor $1/K$ in the perpendicular direction. That is, $h_{K,\theta} = h_{K,-\theta}$
and $h_{K,\theta} = Kh_{1/K,\theta + \pi /2}$. There are no other such symmetries.

We see that $f_{K,\theta,C}$ is linearly conjugate to $f_{1/K,\theta +\pi/2, CK^{2}}$ via the conjugation $L(z) = z/K^{2}$. So if $Me^{i(\phi + \arg(a))} \notin U$, then we can apply such a conjugation (and take $-\phi - \arg(a)$ if necessary) so that $1/M e^{i(\phi +\arg (a) +\pi /2)} \in U$.

Recalling that all stretches with $K=1$ are identical, we see that if we require $Ke^{i\theta} \in U$, then the canonical form for $f_{K,\theta,C}$ is unique.
\end{proof}

For brevity, we will define the set
\begin{equation*}
QA = \{ f:\C \rightarrow \C : f=g \circ h, g(z)=z^{2}+c, h=h_{K,\theta}, c\in \C, Ke^{i\theta} \in U \}.
\end{equation*}
We will also mention the set
\begin{equation*}
PA = \{ f:\C \rightarrow \C : f=g \circ h, g \text{ is a polynomial of degree at least }2, h =h_{K,\theta} \},
\end{equation*}
and so $QA \subset PA$.

We observe that the dynamics of $f$ in Proposition \ref{canonicalform} correspond to those of $\widetilde{f}$, in particular, $z \in I(f)$ if and only if $z \in I(\widetilde{f})$.
Therefore, every composition of a quadratic polynomial and an affine stretch is linearly conjugate to some element of $QA$, and so we just need to study the dynamics of mappings in $QA$. 

\section{The boundary of the escaping set}

The escaping set is defined as
\begin{equation*}
I(f) = \{ z \in \C : f^{n}(z) \rightarrow \infty \}.
\end{equation*}
Recalling Theorem \ref{FN}, the requirement that the distortion is smaller than the degree is a necessary one as the following example shows.

\begin{ex}
\label{ex1}
Consider the winding map $f:(r,\theta) \mapsto (r, 2\theta)$ in polar coordinates.
This map decomposes as $f= g \circ h$ where $g(z)=z^{2}$ and $h(r,\theta ) = (r^{1/2},\theta)$.
The partial derivatives of $h$ are
\begin{equation*}
h_{r} = \frac{e^{i\theta }}{2r^{1/2}}, \:\:\:\:\: h_{\theta} = ir^{1/2}e^{i\theta}
\end{equation*}
and so the complex dilatation is
\begin{equation*}
\mu _{h} = e^{2i\theta} \frac{ h_{r} + \frac{i}{r}h_{\theta}}{h_{r} - \frac{i}{r}h_{\theta}} = \frac{-e^{2i\theta}}{3}.
\end{equation*}
We conclude that $\av \av \mu _{h} \av \av _{\infty} = 1/3$ and the distortion of $h$ is $2$. Therefore the distortion of $f$ is $2$, and clearly the degree of $f$ is $2$, but $I(f)$ is empty, since $\av f(z)\av =\av z\av$ for every $z \in \C$.
\end{ex}

Therefore, Theorem \ref{FN} only applies to those $f \in QA$ with $\av (K-1)/(K+1) \av <1/3$ by (\ref{eq3.3}). However, in our special situation, we can actually deduce the results of Theorem \ref{FN} by modifying the proof. 
For $f \in QA$, write $f=g \circ h$, where $g(z)=z^{2}+c$ and $h=h_{k,\theta}$ for $K e^{i\theta} \in U$ and $c \in \C$. We first estimate the growth of $h$.

\begin{lemma}
\label{hlemma}
Let $h$ be as above and set $L = \max \{ K, 1/K \}$.
Then for any $z \in \C$,
\begin{equation*}
\frac{\av z \av}{L} \leq \av h(z) \av \leq L \av z \av.
\end{equation*}
Further, $h$ is $L$-bi-Lipschitz, that is
\begin{equation*}
\frac{\av z - w\av}{L}   \leq \av h(z) - h(w) \av \leq L \av z - w \av
\end{equation*}
for all $z,w \in \C$.
\end{lemma}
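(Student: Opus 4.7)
The plan is to exploit that $h$ is $\R$-linear (as noted after equation (\ref{eq3.7})) to reduce everything to a growth estimate of $|h(z)|$ on circles. Indeed, $h(z)-h(w)=h(z-w)$, so once we show $|z|/L\le|h(z)|\le L|z|$ for every $z\in\C$, the bi-Lipschitz statement follows at once by applying the pointwise estimate to $z-w$.

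To establish the pointwise bound I would start from the closed form (\ref{eq3.4a}),
\[
h(z)=\frac{K+1}{2}\,z+e^{2i\theta}\frac{K-1}{2}\,\bar z,
\]
write $z=re^{i\alpha}$, and factor out $re^{i\theta}$ to obtain
\[
h(z)=re^{i\theta}\!\left[\frac{K+1}{2}e^{i(\alpha-\theta)}+\frac{K-1}{2}e^{-i(\alpha-\theta)}\right]
=re^{i\theta}\bigl[K\cos(\alpha-\theta)+i\sin(\alpha-\theta)\bigr].
\]
Taking absolute values gives
\[
|h(z)|^{2}=r^{2}\bigl(K^{2}\cos^{2}(\alpha-\theta)+\sin^{2}(\alpha-\theta)\bigr).
\]

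The final step is simply to maximize and minimize the bracket over the angle $\alpha-\theta$. The expression lies between $\min\{1,K^{2}\}$ and $\max\{1,K^{2}\}$, so in either case $K\ge1$ or $K<1$ we get
\[
\frac{r^{2}}{L^{2}}\le|h(z)|^{2}\le L^{2}r^{2},\qquad L=\max\{K,1/K\},
\]
which is the desired growth bound. Combining with the $\R$-linearity observation finishes the proof. There is no real obstacle here beyond choosing the right parametrization; the geometric content is that a stretch by factor $K$ in one direction contracts/expands distances by at most a factor $L$ in any direction, and the formula (\ref{eq3.4a}) makes this transparent.
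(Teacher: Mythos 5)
Your proof is correct and follows essentially the same route as the paper: establish the pointwise growth bound $|z|/L\le|h(z)|\le L|z|$ and then deduce the bi-Lipschitz property from the $\R$-linearity of $h$ via $h(z)-h(w)=h(z-w)$. The only difference is that the paper treats the pointwise bound as immediate from the definition of a $K$-stretch (noting the sharper constants $\min\{K,1\}$ and $\max\{K,1\}$), whereas you verify it by an explicit and correct computation of $|h(z)|^2=r^2\bigl(K^2\cos^2(\alpha-\theta)+\sin^2(\alpha-\theta)\bigr)$ from (\ref{eq3.4a}).
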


\begin{proof}
This follows immediately from the definition of $h$, since
\begin{equation*}
L_{1}\av z \av \leq \av z \av \leq L_{2} \av z \av
\end{equation*}
where $L_{1} = \min \{K,1\}$ and $L_{2} = \max \{K,1\}$.
The fact that $h$ is bi-Lipschitz follows from the $\R$-linearity of $h$.
\end{proof}

We extend Theorem \ref{FN} in dimension $2$ as follows.

\begin{theorem}
\label{FNext}
Let $g$ be a polynomial of degree $d \geq 2$ and let $h$ be $L$-bi-Lipschitz. Let $f=g \circ h$, then $I(f)$ is a non-empty open set and $\partial I(f)$ is a perfect set.
\end{theorem}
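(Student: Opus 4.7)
The plan is to follow the strategy of Theorem~\ref{FN} from \cite{FN}, but replace the quasiregular growth estimates there with the polynomial growth of $g$ and the explicit bi-Lipschitz bound on $h$. First I would establish an escape radius. Since $\deg g \geq 2$, there exists $R_0>0$ with $|g(w)| \geq 2L|w|$ for $|w| \geq R_0$, and combined with $|h(z)| \geq |z|/L$ this yields $|f(z)| \geq 2|z|$ whenever $|z| \geq LR_0 =: R$. Iterating, $\{|z|>R\} \subseteq I(f)$, so $I(f)$ is non-empty, and writing $I(f) = \bigcup_n (f^n)^{-1}\{|z|>R\}$ shows it is open.

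Next I would show that $K(f) := \C \setminus I(f)$ is non-empty, which combined with the above gives $\partial I(f) \neq \emptyset$. Note that $K(f) \subseteq \overline{D(0,R)}$, since any excursion outside $\overline{D(0,R)}$ triggers exponential escape. Now $f$ is surjective: $g$ is surjective as a non-constant polynomial, and an $L$-bi-Lipschitz map of $\C$ into itself is a homeomorphism of $\C$ onto $\C$ (its image is open by invariance of domain and closed by properness). Thus $f^n$ is surjective for every $n$; choose $z_n$ with $f^n(z_n)=0$. The escape estimate forbids $|f^k(z_n)| \geq R$ for any $k \leq n$, so $z_n \in \overline{D(0,R)}$, and any cluster point $z^*$ satisfies $|f^k(z^*)| \leq R$ for all $k$, hence $z^* \in K(f)$.

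For the perfect property, I would rule out isolated points. Suppose $z_0 \in \partial I(f)$ has a disc neighborhood $U$ with $U \cap \partial I(f) = \{z_0\}$. Using the disjoint decomposition $\C = I(f) \cup \operatorname{int}K(f) \cup \partial I(f)$, the connected set $U \setminus \{z_0\}$ lies entirely in one of the two open pieces; it cannot lie in $\operatorname{int}K(f)$ as this would contradict $z_0 \in \overline{I(f)}$, so $U \setminus \{z_0\} \subseteq I(f)$. Then every $w \in U \setminus \{z_0\}$ escapes while $\{f^n(z_0)\}$ stays in $\overline{D(0,R)}$. Since $g$ is an open analytic map and $h$ a homeomorphism, each $f^n$ is open, and $f^n(U)$ is a connected open set whose diameter tends to infinity. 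The goal is to show that for some $n$ the image $f^n(U)$ contains a point $w \in \partial I(f)$ with $w \neq f^n(z_0)$; then any $z \in U$ with $f^n(z)=w$ lies in $\partial I(f) \setminus \{z_0\}$ (since $f^{-n}(\partial I(f)) \subseteq \partial I(f)$, which in turn uses the openness of $f$), contradicting the choice of $U$.

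The main obstacle is justifying this blow-up step. As $h$ is bi-Lipschitz in dimension~$2$ it is quasiconformal, so $f = g \circ h$ is quasiregular and $\{f^n|_U\}$ is a non-normal family of quasiregular maps at $z_0$. The natural tool is a Picard/Miniowitz-type theorem, but the distortion $K(f^n)$ may grow exponentially in $n$, so Miniowitz does not apply directly; this is precisely the setting in which the hypothesis $d > K_I(f)$ of Theorem~\ref{FN} fails. A remedy is to Stoilow-factorize each iterate as $f^n = G_n \circ H_n$ (Theorem~\ref{Stoilow}), apply image-covering theory to the analytic factor $G_n$ of degree $d^n$, and pull the conclusion back through the quasiconformal $H_n$. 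Controlling how the varying $H_n$ distorts the bounded set $\partial I(f)$, and making the transfer uniform enough to guarantee that $f^n(U)$ eventually hits a second point of $\partial I(f)$, is the main technical challenge of the proof.
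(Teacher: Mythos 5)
Your escape-radius computation, the openness of $I(f)$, and the reduction of an isolated point of $\partial I(f)$ to an isolated point of $I(f)^{c}$ all match the paper's argument. The genuine gap is in how you propose to finish the perfectness claim, and you flag it yourself: the ``blow-up'' step. Showing that $f^{n}(U)$ eventually captures a second point of $\partial I(f)$ would require a Picard--Miniowitz type covering theorem for the family $\{f^{n}\}$, and that is exactly the tool that is unavailable here because the distortion of $f^{n}$ grows with $n$; Stoilow-factorizing each iterate does not rescue the argument, since you have no uniform control over the quasiconformal factors $H_{n}$. Leaving this as ``the main technical challenge'' leaves the theorem unproved, and indeed this route reintroduces the very hypothesis $d>K_{I}$ that the theorem is designed to avoid.

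The idea you are missing is that no value-distribution input is needed at all. As in \cite{FN} (the paper imports this step verbatim from the proof of Theorem \ref{FN}), an isolated point $z_{0}$ of $I(f)^{c}$ forces $I(f)=\C\setminus\{z_{0}\}$. Complete invariance of $I(f)^{c}$ then gives $f^{-1}(z_{0})=\{z_{0}\}$, so the local index of $f$ at $z_{0}$ is the full degree $d$; hence $g(w)-z_{0}$ has $w=h(z_{0})$ as a root of multiplicity $d$, i.e.\ $f(z)=a\bigl(h(z)-h(z_{0})\bigr)^{d}+z_{0}$. The \emph{upper} Lipschitz bound now yields
\begin{equation*}
|f(z)-z_{0}| \leq |a|\,L^{d}\,|z-z_{0}|^{d} < \tfrac{1}{2}|z-z_{0}|
\end{equation*}
for $|z-z_{0}|$ sufficiently small, so a punctured neighbourhood of $z_{0}$ is attracted to $z_{0}$ and lies in $I(f)^{c}$, contradicting $I(f)=\C\setminus\{z_{0}\}$. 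This local-index argument is precisely where the bi-Lipschitz hypothesis on $h$ (rather than mere quasiconformality) does its work, replacing the modulus estimates of \cite{FN} that require the degree to exceed the distortion.
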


\begin{proof}
A bi-Lipschitz mapping is quasiconformal, and so $f$ is quasiregular.
The first step is to show that $I(f)$ is non-empty. 
Since $h$ is $L$-bi-Lipschitz and $g(z) = z^{d}(1+o(1))$ for large $\av z \av$, we have
\begin{equation*}
\av f(z) \av = \av g(h(z)) \av  = \av h(z) \av ^{d} (1+o(h(z))) \geq \frac{\av z \av ^{d}}{L^{d}} (1+o(z)).
\end{equation*}
Therefore there exists $R>0$ such that 
\begin{equation*} 
\av f(z) \av >2 \av z \av,
\end{equation*} 
for $\av z \av >R$, and we can conclude that this neighbourhood of infinity is contained in $I(f)$.
The openness of $I(f)$ follows from the continuity of quasiregular mappings and the fact that $I(f)$ contains a neighbourhood of infinity. Clearly $I(f)$ is completely invariant, and therefore $\partial I(f)$ is completely invariant.

The fact the $I(f)$ is open means that $I(f)$ has no isolated points. To show that $I(f)$ is perfect, we therefore have to show that $I(f)^{c}$ has no isolated points. This is the part of the proof that requires modification when compared to Theorem \ref{FN}. Exactly as in the proof of that theorem 
(we omit the details here, see \cite{FN}), we assume for contradiction that if $z_{0}$ is an isolated point of $I(f)$, and see that then $I(f) = \C \setminus \{ z_{0} \}$.

Since $f(z_{0}) = z_{0}$, we must have $i(z_{0},f) = d$, where $i$ is the local index. Therefore
\begin{equation*}
f(z) = g(h(z)) = (h(z)-h(z_{0}))^{d} + z_{0}.
\end{equation*}
Using again the fact that $h$ is bi-Lipschitz,
\begin{equation*}
\av f(z) - z_{0} \av = \av h(z) - h(z_{0})\av ^{d} \leq L^{d}\av z - z_{0} \av ^{d} < \frac{\av z- z_{0} \av}{2}
\end{equation*}
for $\av z-z_{0} \av <2^{1/(1-d)}L^{d/(1-d)}$.
nd so there is a neighbourhood of $z_{0}$ which is not in $I(f)$, giving a contradiction.
\end{proof}

We remark that in the Stoilow decomposition of the function in Example \ref{ex1}, it is easy to see that the quasiconformal mapping is not bi-Lipschitz.

\begin{corollary}
\label{QAescset}
Let $f \in QA$. Then $I(f)$ is a non-empty open set, and $\partial I(f)$ is a perfect set.
\end{corollary}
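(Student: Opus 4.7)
The plan is to observe that Corollary \ref{QAescset} is essentially just Theorem \ref{FNext} applied to the particular form of functions in $QA$. I would first unpack the definition: every $f \in QA$ is written as $f = g \circ h$ where $g(z) = z^{2} + c$ is a polynomial of degree $d = 2 \geq 2$, and $h = h_{K,\theta}$ is an affine stretch with $Ke^{i\theta} \in U$. The hypothesis of Theorem \ref{FNext} asks for two things: that $g$ be a polynomial of degree at least $2$ (immediate) and that $h$ be $L$-bi-Lipschitz for some $L$.

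Next I would invoke Lemma \ref{hlemma} to verify the bi-Lipschitz condition. The lemma tells us exactly that $h_{K,\theta}$ is $L$-bi-Lipschitz with $L = \max\{K, 1/K\}$, which is finite since $K > 0$. With both hypotheses of Theorem \ref{FNext} satisfied, the conclusion that $I(f)$ is a non-empty open set and $\partial I(f)$ is perfect follows at once.

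I expect no real obstacle: the work was done upstream in establishing Lemma \ref{hlemma} and Theorem \ref{FNext}. The only point worth emphasizing in the write-up is that this corollary sidesteps the ``degree greater than inner distortion'' hypothesis of Theorem \ref{FN}. Because Theorem \ref{FNext} replaces that distortion condition with a bi-Lipschitz condition on the Stoilow factor $h$, and because every affine stretch is automatically bi-Lipschitz, we obtain the escaping-set conclusion for \emph{every} $f \in QA$, not merely those with $|(K-1)/(K+1)| < 1/3$. This is the whole point of isolating Theorem \ref{FNext} as a separate statement, and the corollary is the one-line cash-in.
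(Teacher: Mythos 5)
Your proposal is correct and matches the paper's argument exactly: the paper's proof is a one-line appeal to Lemma \ref{hlemma} (to get that $h_{K,\theta}$ is $L$-bi-Lipschitz with $L=\max\{K,1/K\}$) followed by Theorem \ref{FNext}. Your additional remark about sidestepping the distortion hypothesis of Theorem \ref{FN} is also consistent with the paper's discussion surrounding Example \ref{ex1}.
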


\begin{proof}
This is an immediate consequence of Lemma \ref{hlemma} and Theorem \ref{FNext}.
\end{proof}

The following theorem is proved in \cite{FN}, and the proof goes through exactly as there and so is omitted.

\begin{theorem}
\label{list}
Let $f$ be as in Theorem \ref{FNext}. Then for any $k \geq 2$, $I(f^{k}) = I(f)$. The family of iterates $\{f^{k}:k \in \N \}$ is equicontinuous on $I(f)$ and not equicontinuous at any point of $\partial I(f)$. The set $\partial I(f)$ is infinite. The sets $I(f)$, $\partial I(f)$ and $\overline{I(f)} ^{c}$ are all completely invariant. The escaping set is a connected neighbourhood of infinity.
\end{theorem}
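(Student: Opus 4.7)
The plan is to verify each of the six assertions in turn, reusing the escape estimate $|f(z)|>2|z|$ for $|z|>R$ established in the proof of Theorem \ref{FNext} together with the fact that $f=g\circ h$ is of polynomial type, i.e.\ $|f(z)|\to\infty$ as $|z|\to\infty$.

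For $I(f^{k})=I(f)$: the inclusion $I(f)\subseteq I(f^{k})$ is immediate. For the reverse, write any integer $m$ as $m=kq+r$ with $0\le r<k$; then $f^{kq}(z)\to\infty$ and polynomial type ensures $f^{r}(f^{kq}(z))\to\infty$, giving $f^{m}(z)\to\infty$. Complete invariance of $I(f)$ is already noted in Theorem \ref{FNext}; using that $f$ is open as well as continuous, one verifies $f^{-1}(\overline{I(f)})=\overline{I(f)}$, so $\partial I(f)$ and $\overline{I(f)}^{c}$ are also completely invariant. For equicontinuity on $I(f)$, work with the spherical metric $\sigma$: on any compact $E\subset I(f)$ some iterate $f^{N}(E)\subset\{|z|>R\}$, and then $|f^{N+j}|\ge 2^{j}R$ uniformly on $E$, so $f^{k}\to\infty$ spherically locally uniformly on $I(f)$, which gives equicontinuity.

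For non-equicontinuity at $z_{0}\in\partial I(f)$: since $I(f)$ is open, $z_{0}\notin I(f)$, so some subsequence $f^{k_{j}}(z_{0})$ converges spherically to a point $z_{1}\ne\infty$. Since $z_{0}\in\partial I(f)$ we can pick $w$ arbitrarily close to $z_{0}$ with $f^{k}(w)\to\infty$, and along a common subsequence $\sigma(f^{k_{j}}(w),f^{k_{j}}(z_{0}))\to\sigma(\infty,z_{1})>0$, contradicting equicontinuity at $z_{0}$. For $\partial I(f)$ infinite: Theorem \ref{FNext} gives $\partial I(f)$ perfect, so we only need $\partial I(f)\ne\emptyset$, equivalently $I(f)\ne\C$. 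This follows from the existence of a non-escaping orbit, e.g.\ a fixed point of $f$ in $\C$, whose existence comes from a Lefschetz-type degree argument on the Riemann sphere for the polynomial-type quasiregular $f$ of degree $d\ge 2$. A non-empty perfect subset of $\C$ is uncountable, hence infinite.

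Finally, for connectedness of $I(f)$: set $U=\{|z|>R\}$, so $I(f)=\bigcup_{n\ge 0}f^{-n}(U)$ is an increasing union. It suffices inductively to show that if $W$ is an open connected neighborhood of infinity then $f^{-1}(W)$ is connected. Each component $V$ of $f^{-1}(W)$ maps properly onto $W$ (using that $f:\C\to\C$ is proper by polynomial type, and that $f(\partial V)\subseteq\C\setminus W$). A bounded component $V$ is impossible: surjectivity onto the unbounded $W$ would produce $z_{n}\in V$ with $f(z_{n})\to\infty$, and after extracting a subsequence convergent in the bounded closure $\overline{V}$, continuity of $f$ would yield a finite limit, contradiction. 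Hence $f^{-1}(W)$ consists only of the unbounded component containing $W$ and is connected. This direct properness argument, replacing the Riemann--Hurwitz reasoning available in the holomorphic case, is the main obstacle and is the step most particular to the quasiregular setting.
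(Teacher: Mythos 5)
The paper does not actually prove this theorem: it simply states that the proof in \cite{FN} carries over verbatim, so there is no in-paper argument to compare against. Your self-contained proof is essentially sound and follows the standard lines one would expect from \cite{FN}: the $I(f^k)=I(f)$ argument via division with remainder and polynomial type, complete invariance of $\overline{I(f)}$ via openness and continuity of $f$, equicontinuity on $I(f)$ from locally uniform spherical convergence to $\infty$, and non-equicontinuity on $\partial I(f)$ by playing a bounded subsequence of the orbit of $z_0$ against nearby escaping points are all correct. Two points need tightening. First, in the connectedness step, ``no bounded components'' does not by itself imply connectedness of $f^{-1}(W)$ --- there could a priori be several unbounded components, and $W$ itself need not lie in $f^{-1}(W)$, so ``the unbounded component containing $W$'' is not quite right. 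The fix is one line: since $f(\{\av z \av > R\}) \subset \{\av z\av >2R\}\subset W$, the connected set $U=\{\av z\av >R\}$ is contained in $f^{-1}(W)$, and every unbounded component of $f^{-1}(W)$ must meet, hence contain, $U$; so there is exactly one component. Second, the Lefschetz argument for a finite fixed point (needed for $\partial I(f)\neq\emptyset$) requires knowing that the fixed point at $\infty$ does not absorb the whole Lefschetz number $1+d$; this holds because $\infty$ is attracting (index $1$), but it is easier to note that $f^{-1}(\overline{B(0,R)})\subset\overline{B(0,R)}$ by the growth estimate, so $\bigcap_n f^{-n}(\overline{B(0,R)})$ is a nested intersection of non-empty compacta consisting of points with bounded orbit, whence $I(f)\neq\C$. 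With these repairs your argument is complete and has the merit of being independent of the reference the paper leans on.
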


In particular, we have the conclusions of Theorem \ref{list} for $f \in QA$. One may be tempted to ask whether the proof that $\partial I(f)$ is perfect goes through as soon as $I(f)$ contains a neighbourhood of infinity. 
A modification of the winding map shows that this is not the case.

\begin{ex}
For $\lambda >1$ and $k \in \N$ with $k \geq 2$, define $f(re^{i \theta}) = \lambda r e^{i k\theta}$. This mapping decomposes as $f=g \circ h$ where $g(z)= z^{k}$ and
$h(z) = (\lambda r )^{1/k} e^{i \theta}$. We can calculate that the distortion of this mapping is $k$, that every point except $0$ escapes, and so $\partial I(f) = \{ 0 \}$. Clearly $\partial I(f)$ is not a perfect set. We note that $h$ is not bi-Lipschitz.
\end{ex}

\section{Connectedness of $\partial I(f)$}

Let $f$ be quasiregular of polynomial type and either the distortion of $f$ is smaller than the degree or $f \in QA$. Then we define the set of points whose orbits remain bounded
\begin{equation*}
N(f) = \{ z \in \C : \av f^{n} (z) \av <T , \text{ for some } T < \infty, \forall n \in \N \}.
\end{equation*}
Clearly $N(f) = I(f)^{c}$ and $N(f)$ is completely invariant by Theorem \ref{list}. This set is the direct analogue of the filled-in Julia set $K_{f}$ for polynomials, but here we are reserving the use of the symbol $K$ for distortion. 

Recall the branch set $B(f)$ is the set where $f$ is not locally injective. In this section, we will give proofs for $f \in QA$, but the proofs will work equally well for $f \in PA$ or when the degree of $f$ is larger than the distortion.
We first need a quasiregular version of the Riemann-Hurwitz formula.

\begin{theorem}[Riemann-Hurwitz formula]
Let $D_{1},D_{2}$ be domains in $\overline{\C}$ whose boundaries consist of a finite number of simple closed curves. Let $f(z)$ be a proper holomorphic map of $D_{1}$ onto $D_{2}$ with $L$ branch points including multiplicity. Then every $z \in D_{2}$ has the same number $d$ of pre-images including multiplicity and
\begin{equation}
\label{rhformula}
(2-d_{1}) = d (2-d_{2}) - L,
\end{equation}
where $d_{j}$ is the number of boundary components of $D_{j}$.
\end{theorem}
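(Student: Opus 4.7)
The plan is to reduce the identity to the standard topological formula $\chi(D_1) = d\,\chi(D_2) - L$ for a proper branched covering, together with the computation $\chi(\overline{D_j}) = 2 - d_j$. First I would establish that the degree is well-defined: since $f$ is proper and holomorphic, the open mapping theorem makes $f$ open, and the cardinality of the fibre $f^{-1}(w)$, counted with local index, is locally constant on $D_2 \setminus f(B(f))$ and hence constant, because the complement of a finite set of critical values in $D_2$ is connected. Properness lets one extend the same count across the critical values. Next, a domain in $\overline{\C}$ bounded by $d_j$ disjoint Jordan curves is homeomorphic to the sphere with $d_j$ open disks removed, so by the classification of compact surfaces with boundary $\chi(\overline{D_j}) = 2 - d_j$; this furnishes the bridge between the topological invariant on each side and the boundary count appearing in the statement.

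The heart of the argument is a CW/triangulation comparison. I would choose a finite CW decomposition of $\overline{D_2}$ whose $0$-skeleton contains every critical value of $f$ and meets each boundary component in at least one vertex. Because $f$ is a local homeomorphism off the branch locus and every branch value has been declared a vertex, every open edge and open face of $\overline{D_2}$ is evenly covered by exactly $d$ lifted cells in $\overline{D_1}$. At a vertex $v$ with preimages $p_1,\dots,p_r$ of local indices $k_1,\dots,k_r$ we have $k_1 + \cdots + k_r = d$ but only $r$ preimage vertices, so the vertex deficit at $v$ equals $\sum_j(k_j - 1)$, and summing over all vertices gives total deficit $L$.

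Writing $V$, $E$, $F$ for the cell counts on $\overline{D_2}$, the lifted complex on $\overline{D_1}$ therefore has $dV - L$ vertices, $dE$ edges and $dF$ faces, so
\[ \chi(\overline{D_1}) = (dV - L) - dE + dF = d\,\chi(\overline{D_2}) - L. \]
Substituting $\chi(\overline{D_j}) = 2 - d_j$ yields exactly $(2 - d_1) = d(2 - d_2) - L$.

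The main technical obstacle I expect is not the combinatorics but handling the boundary correctly: one needs the properness assumption to conclude that $f$ restricts to a branched covering $\partial D_1 \to \partial D_2$, so that each component of $\partial D_1$ maps onto a single Jordan curve component of $\partial D_2$ and the CW structure chosen downstairs actually lifts to a genuine cell structure on $\overline{D_1}$ rather than picking up stray boundary pieces. Once that compatibility is verified, the rest of the proof is a clean cell count.
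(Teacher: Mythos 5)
The paper does not actually prove this theorem: it is quoted as a classical result (it appears, for instance, in Beardon's \emph{Iteration of rational functions}, which the paper cites), and the only argument supplied in the paper is for the subsequent corollary extending it to quasiregular maps via Stoilow factorization. So there is no ``paper proof'' to compare against; judged on its own, your argument is the standard Euler-characteristic proof and is essentially sound. The reduction to $\chi(\overline{D_1}) = d\,\chi(\overline{D_2}) - L$, the identification $\chi(\overline{D_j}) = 2 - d_j$ for a planar domain bounded by $d_j$ disjoint Jordan curves, the constancy of the fibre count for a proper open map, and the cell count $dV - L$, $dE$, $dF$ with the vertex deficit $\sum_j (k_j - 1)$ summing to $L$ are all correct and are exactly how the classical proof goes. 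The one place that genuinely needs more than you have written is the point you yourself flag: $f$ is only given on the open domain $D_1$, so to run the cell count on the closures you must either justify a continuous extension of $f$ to $\overline{D_1}$ restricting to an unbranched covering of the boundary curves (properness plus the finiteness of the branch set gives this, since near $\partial D_1$ the map is an honest covering onto an annular collar of $\partial D_2$), or sidestep the boundary entirely by replacing $D_2$ with a slightly shrunk subdomain $D_2'$ with the same boundary-component count, containing all critical values, and applying the count to $f^{-1}(D_2') \to D_2'$. Either repair is routine, so I would regard the proposal as correct with one acknowledged technical step left to the reader.
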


\begin{corollary}
\label{rhcor}
The Riemann-Hurwitz formula (\ref{rhformula}) holds when $f:D_{1} \rightarrow D_{2}$ is a quasiregular mapping of degree $d$ in $D_{1}$.
\end{corollary}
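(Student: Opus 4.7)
The plan is to reduce to the classical holomorphic Riemann--Hurwitz formula by invoking Stoilow factorization (Theorem \ref{Stoilow}). Write $f = g \circ h$, where $h$ is quasiconformal on $D_1$ and $g$ is analytic on the image $D_1' := h(D_1)$. Since $h$ is a homeomorphism, it contributes nothing to the branching or to the pre-image count: all of the degree-$d$ and branching data of $f$ sits in $g$, while $h$ simply acts as a (highly non-conformal) change of coordinates.

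The first step is to show that $D_1'$ satisfies the hypotheses of the classical theorem, i.e.\ that its boundary consists of a finite number of simple closed curves, with the same number $d_1$ of components as $\partial D_1$. This follows from the standard fact that a quasiconformal map between domains on the Riemann sphere whose boundaries consist of finitely many disjoint Jordan curves extends to a homeomorphism of the closures; such an extension sends the components of $\partial D_1$ bijectively to the components of $\partial D_1'$. The second step is to check that $g : D_1' \to D_2$ is a proper holomorphic map of degree $d$. Properness of $g$ follows from properness of $f$ together with the fact that $h^{-1}$ is a homeomorphism of closures: a sequence exiting every compact set of $D_1'$ pulls back to a sequence exiting every compact set of $D_1$, whose $f$-image then exits every compact set of $D_2$. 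The degree count is the same because $h$ is a bijection $D_1 \to D_1'$, so for every $w \in D_2$ the fiber $f^{-1}(w)$ is in bijection with $g^{-1}(w)$.

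The third step is to match branch data. Since $h$ is a homeomorphism, $f = g \circ h$ is locally injective at $z$ if and only if $g$ is locally injective at $h(z)$, and the local index is preserved in the sense that $i(z,f) = i(h(z),g)$. Consequently the total number $L$ of branch points counted with multiplicity is the same for $f$ as for $g$. Applying the holomorphic Riemann--Hurwitz formula to $g : D_1' \to D_2$ then yields (\ref{rhformula}) for $f$.

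The only real subtlety is the boundary-extension argument in Step 1; everything else is essentially bookkeeping once one has Stoilow's theorem. If one wished to avoid the quasiconformal boundary-extension result, an alternative would be to exhaust $D_1$ by slightly smaller subdomains with smooth boundaries on which the extension is trivial, apply the formula there, and pass to the limit using the finiteness of $B(f)$ guaranteed by the corollary following Theorem \ref{Stoilow}.
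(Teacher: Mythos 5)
Your proposal is correct and follows essentially the same route as the paper: Stoilow factorization $f = g\circ h$, observing that the quasiconformal homeomorphism $h$ preserves the number of boundary components, the fibre count, and the branch data, and then applying the holomorphic Riemann--Hurwitz formula to $g: h(D_1)\to D_2$. Your treatment is somewhat more careful than the paper's (which simply asserts the boundary-component and properness facts), but the argument is the same.
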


\begin{proof}
By Theorem \ref{Stoilow}, we can write $f=g \circ h$, where $g$ is holomorphic and $h$ is quasiconformal. Since $h$ is quasiconformal, it does not alter the number of boundary components of $D_{1}$, $h$ has no branch points in $D_{1}$ and every $z \in h(D_{1})$ has exactly one pre-image under $h$ in $D_{1}$. Therefore every $z \in D_{2}$ has $d$ pre-images under $g$ in $h(D_{1})$ including multiplicity, $g$ has $L$ branch points including multiplicity in $h(D_{1})$ and so we can apply (\ref{rhformula}) to $g:h(D_{1}) \rightarrow D_{2}$.
\end{proof}

We now move on to our first connectedness result.

\begin{figure}
  \caption{The set of non-escaping points for $f(z) = (h_{0.8,0}(z))^{2}-1.25$, a pinched basilica.}\label{0.8K_-1.25cfig}
\begin{center}
    \includegraphics{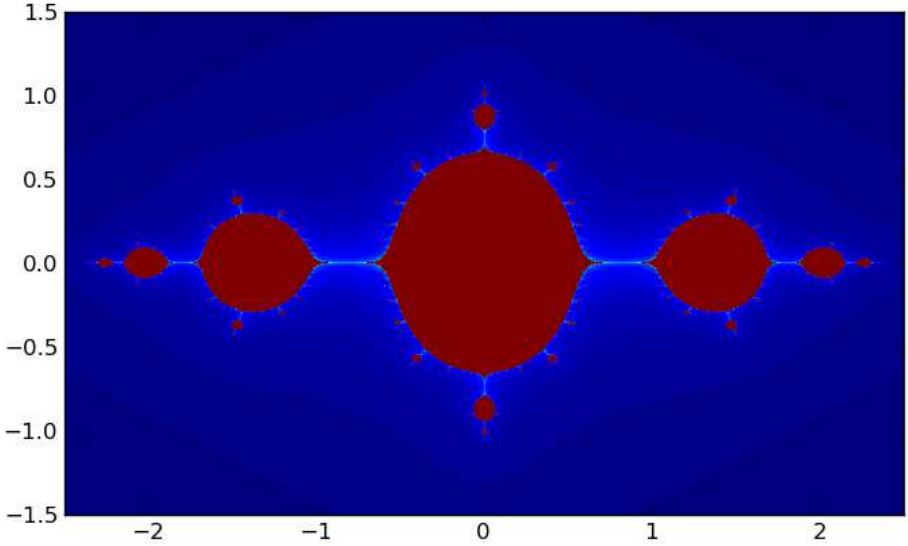}
\end{center}
\end{figure}

\begin{figure}
  \caption{The set of non-escaping points for $f(z) = (h_{0.7,\pi /12}(z))^{2}-2.297+0.295i$, a banking airplane.}\label{0.7_pi-12K-2.297+0.295icfig}
\begin{center}
    \includegraphics{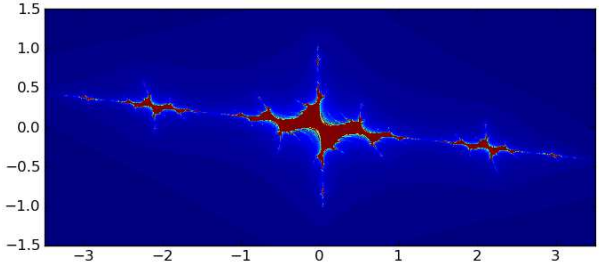}
\end{center}
\end{figure}

\begin{theorem}
\label{QAconn}
Let $f \in QA$. Then $N(f)$ is connected if and only if $I(f) \cap B(f) = \emptyset$.
\end{theorem}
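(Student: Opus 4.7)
The plan is to mimic the classical connectedness dichotomy for polynomial filled Julia sets, using the quasiregular Riemann-Hurwitz formula (Corollary \ref{rhcor}) in place of its holomorphic counterpart. Choose $R$ large enough that $\{|z|>R\}\subset I(f)$ (possible by the expansion estimate in the proof of Theorem \ref{FNext}) and generic enough that $\{|z|=R\}$ misses the forward orbit of $0$; set $U_{0}=\{|z|>R\}\cup\{\infty\}$ and $U_{n}=f^{-n}(U_{0})$. Writing $f=g\circ h$ with $g(w)=w^{2}+c$, the branch set is $B(f)=\{0\}$ because $h$ is a homeomorphism, and $\infty$ is a critical point of $f:\overline{\C}\to\overline{\C}$ of local index $2$. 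Since $\infty$ is the only preimage of itself and carries the full global degree, a degree-counting argument shows that $f^{-1}$ of any connected open $U\ni\infty$ is connected. Inductively then each $U_{n}$ is connected, $f:U_{n}\to U_{n-1}$ is a proper degree-$2$ branched cover, and the generic choice of $R$ ensures $\partial U_{n}$ is a finite union of simple closed curves so that Corollary \ref{rhcor} applies.

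Letting $d(U_{n})$ denote the number of boundary components of $U_{n}$, Riemann-Hurwitz gives
\[
d(U_{n})=2\,d(U_{n-1})-2+L_{n-1},
\]
where $L_{n-1}$ counts branch points of $f$ in $U_{n}$ with multiplicity. The point $\infty$ always contributes $1$; the finite critical point $0$ contributes $1$ precisely when $0\in U_{n}$, equivalently when $f^{n}(0)\in U_{0}$. For the backward direction, assume $I(f)\cap B(f)=\emptyset$, so the orbit of $0$ is bounded. Enlarging $R$ if necessary so that $|f^{n}(0)|<R$ for every $n$, we obtain $L_{n-1}=1$ throughout and, starting from $d(U_{0})=1$, induction gives $d(U_{n})=1$ for every $n$. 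Each $\overline{\C}\setminus U_{n}$ is then a continuum and $N(f)=\bigcap_{n}(\overline{\C}\setminus U_{n})$ is connected as a nested intersection of continua in $\overline{\C}$.

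For the forward direction I prove the contrapositive. If $0\in I(f)$, let $m$ be the smallest integer with $f^{m}(0)\in U_{0}$; then $d(U_{m-1})=1$ but $d(U_{m})=2\cdot 1-2+2=2$, so $\overline{\C}\setminus U_{m}=V_{1}\sqcup V_{2}$ splits into two disjoint compact continua which, by Jordan--Schoenflies, are closed topological disks. The critical values $c$ and $\infty$ of $f$ both lie in $U_{m-1}$ (the former by minimality of $m$), hence $f:V_{1}\sqcup V_{2}\to\overline{\C}\setminus U_{m-1}$ is an unbranched degree-$2$ covering of the simply connected closed disk $\overline{\C}\setminus U_{m-1}$. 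Any such cover is trivial, so each restriction $f|_{V_{i}}:V_{i}\to\overline{\C}\setminus U_{m-1}$ is a homeomorphism. Using the inclusion $V_{i}\subset\overline{\C}\setminus U_{m}\subset\overline{\C}\setminus U_{m-1}$, the composition
\[
\overline{\C}\setminus U_{m-1}\;\xrightarrow{(f|_{V_{i}})^{-1}}\;V_{i}\;\hookrightarrow\;\overline{\C}\setminus U_{m-1}
\]
is a continuous self-map of a closed topological disk, so Brouwer's theorem delivers a fixed point $z_{i}^{\ast}\in V_{i}$, necessarily satisfying $f(z_{i}^{\ast})=z_{i}^{\ast}$ and hence lying in $N(f)$. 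Thus $N(f)$ meets both $V_{1}$ and $V_{2}$ and is disconnected.

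The step I expect to require the most care is the technical verification that $\partial U_{n}$ is a finite union of simple closed curves so that Corollary \ref{rhcor} genuinely applies at each stage, together with the degree-theoretic claim that $U_{n}=f^{-n}(U_{0})$ is connected; both rest on choosing $R$ generically and on the fact that $\infty$ is the sole preimage of $\infty$ with local index equal to the global degree.
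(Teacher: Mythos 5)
Your proof is correct, and it follows the same basic strategy as the paper --- apply the quasiregular Riemann--Hurwitz formula (Corollary \ref{rhcor}) to the preimages $U_{n}=f^{-n}(U_{0})$ of a neighbourhood of infinity, with the dichotomy governed by whether the branch point $0$ ever enters some $U_{n}$ --- but the execution differs in two genuine ways. First, you work with the full domains $U_{n}\subset\overline{\C}$ and count $\infty$ as a branch point of local index $2$, so your recursion $d(U_{n})=2d(U_{n-1})-2+L_{n-1}$ carries the constant contribution $1$ from $\infty$ plus the conditional contribution from $0$; the paper instead applies Riemann--Hurwitz only to the annular regions $U_{M}\setminus\overline{U_{M-1}}\to U_{M-1}\setminus\overline{U_{M-2}}$, which sidesteps any discussion of quasiregularity and branching at $\infty$ (a point you should at least acknowledge needs the standard polynomial-type extension of $f$ to $\overline{\C}$). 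Second, and more substantively, in the disconnected direction the paper stops at ``$U_{M}$ is not simply connected and hence $N(f)$ is not connected,'' which as stated hides a step: a complementary component of $U_{M}$ could in principle consist entirely of points that escape at a later time, so one must actually exhibit non-escaping points in each component. Your Brouwer fixed-point argument --- each $V_{i}$ maps homeomorphically onto the closed disk $\overline{\C}\setminus U_{m-1}$ containing it, so $(f|_{V_{i}})^{-1}$ has a fixed point of $f$ in $V_{i}$ --- supplies exactly this, and is the standard way to complete the polynomial argument; it costs you the extra bookkeeping of a generic choice of $R$ and the Jordan--Schoenflies identifications, but it makes the disconnectedness conclusion airtight where the paper's version is terse. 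Your connected direction (all $L_{n-1}=1$, so each $\overline{\C}\setminus U_{n}$ is a continuum and $N(f)$ is a nested intersection of continua) is an equally valid alternative to the paper's covering-space argument that each $U_{j}\cup\{\infty\}$ is simply connected.
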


\begin{proof}
Recall from the proof of Theorem \ref{QAescset} the existence of a neighbourhood of infinity $U=\{ \av z \av >R \}$ contained in $I(f)$.

First assume that $I(f) \cap B(f) = \emptyset$. Then $f^{j}$ gives a quasiregular $2^{j}$-sheeted covering of $U_{j} = f^{-j}(U)$ onto $U$ with no branching,
since $f^{j}$ has degree $2^{j}$. Since $U \cup \{ \infty \}$ is simply connected, so is every $U_{j} \cup \{ \infty \}$. Since $I(f) = \cup _{j=0}^{\infty} f^{-j}(U)$, then $I(f)$ must be simply connected and hence $N(f)$ is connected.

Conversely, suppose that $I(f) \cap B(f)$ is not empty. Let $M$ be the smallest value of $j$ such that $U_{j} \cap B(f) $ is not empty.
Applying Corollary \ref{rhcor} to the quasiregular map 
\begin{equation*}
f: (U_{M} \setminus \overline{U_{M-1}}) \rightarrow (U_{M-1} \setminus \overline{U_{M-2}})
\end{equation*}
with $d_{2} = 2$, $d=2$ and $L \geq 1$, we see that $d_{1} \geq 3$. This implies that $U_{M}$ is not simply connected and hence $N(f)$ is not connected.
\end{proof}

Since $0$ is the only branch point of $f \in QA$, this theorem is only interested in whether $0$ escapes or not. However, we have formulated it this way, because the proof is equally valid for $f \in PA$ or when the degree of $f$ is larger than the distortion, both cases where $f$ may have more than one branch point.
Examples of connected $N(f)$ are the Douady dragon of Figure \ref{0.8K_-0.21-0.78icfig}, the pinched basilica of Figure \ref{0.8K_-1.25cfig} and the banking airplane of Figure \ref{0.7_pi-12K-2.297+0.295icfig}.

Moving back to the case $f \in QA$, if $N(f)$ is not connected, then the following result tells us that $N(f)$ is actually infinitely connected.

\begin{theorem}
\label{infconn}
Let $f \in QA$. If $I(f)$ contains $B(f)$, then $N(f)$ is infinitely connected.
\end{theorem}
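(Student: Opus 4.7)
My plan is to show that the closed sets $V_{j} := \C \setminus U_{j}$, where $U_{j} = f^{-j}(U)$ and $U$ is the forward-invariant neighborhood of infinity from the proof of Theorem \ref{FNext}, consist of pairwise disjoint closed topological disks whose number grows exponentially, and then verify that these disks sit inside distinct components of $N(f)$. Since $f = g \circ h \in QA$ with $g(w) = w^{2}+c$ and $h$ quasiconformal, the unique branch point of $f$ is $b := h^{-1}(0)$, and by hypothesis $b \in I(f)$; let $M$ be the smallest integer with $b \in U_{M}$. A short computation shows $f(b) \in V_{j-1}$ if and only if $1 \leq j \leq M-1$.

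The core of the argument is an induction on $j$ establishing that $V_{j}$ is a disjoint union of $m_{j}$ closed topological disks in $\C$, with $m_{0} = 1$, $m_{j} = 2m_{j-1} - 1$ for $1 \leq j \leq M-1$, and $m_{j} = 2m_{j-1}$ for $j \geq M$; the latter recurrence forces $m_{j} \to \infty$ exponentially. For the inductive step I apply Corollary \ref{rhcor} componentwise to each disk $D$ of $V_{j-1}$. If $f(b) \notin D$, then $f \colon f^{-1}(D^{\circ}) \to D^{\circ}$ is an unbranched proper degree-$2$ cover of a simply connected region, hence trivial, and $f^{-1}(D)$ splits into two disjoint closed disks. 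If $f(b) \in D$, then Corollary \ref{rhcor} with $d_{2} = 1$, $d = 2$, and $L = 1$ yields $d_{1} = 1$; a disconnected degree-$2$ branched cover would split into two degree-$1$ sheets which cannot accommodate a branch point, so $f^{-1}(D)$ is a single closed disk.

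To conclude, each component $D$ of $V_{j}$ with $j \geq M$ meets $N(f)$: by the recurrence every component of $V_{k}$ for $k \geq M$ contains at least two components of $V_{k+1}$, so one extracts a decreasing chain $D = D^{(j)} \supset D^{(j+1)} \supset \cdots$ of nested components whose intersection is nonempty by compactness and is contained in $\bigcap_{k} V_{k} = N(f)$. Two distinct components of $V_{j}$ are separated in $\C$ by the open set $U_{j} \subset I(f)$, so points of $N(f)$ lying in different disks cannot be joined by a path in $N(f)$. Hence $N(f)$ has at least $m_{j}$ components for every $j \geq M$, and $m_{j} \to \infty$.

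The main technical obstacle is the inductive claim that $V_{j}$ is genuinely a disjoint union of closed topological disks, rather than some more complicated configuration involving annuli. This rests on the trivialization of unbranched degree-$2$ covers over simply connected disks in one case and on the Euler-characteristic bookkeeping of Corollary \ref{rhcor} in the other; maintaining the disk structure through the induction is exactly what drives the exponential count that gives infinite connectivity.
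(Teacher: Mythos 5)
Your overall strategy---pull back the complement $V_j = \C\setminus f^{-j}(U)$, use Corollary \ref{rhcor} to show each $V_j$ is a disjoint union of $m_j$ closed topological disks with $m_j\to\infty$, and conclude that $N(f)=\bigcap_j V_j$ has at least $m_j$ components---is a legitimate alternative to the paper's argument, and your Riemann--Hurwitz bookkeeping for the count $m_j$ is essentially right. But there is a genuine gap where you pass from the count to the nesting. You assert that ``by the recurrence every component of $V_k$ for $k\geq M$ contains at least two components of $V_{k+1}$,'' and you use this both to extract the decreasing chains $D^{(j)}\supset D^{(j+1)}\supset\cdots$ and to conclude that every component of $V_j$ meets $N(f)$. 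The recurrence does not give this. It says that for each component $D$ of $V_k$ the preimage $f^{-1}(D)$ consists of two (or one) components of $V_{k+1}$; but $f^{-1}(D)$ need not lie in $D$, so a priori the components of $V_{k+1}$ could all accumulate inside a few components of $V_k$, leaving other components of $V_k$ disjoint from $V_{k+1}$ and hence from $N(f)$. Since the whole conclusion rests on each of the $m_j$ disks actually containing a point of $N(f)$, this is the crux, not a formality.

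The gap is repairable. Because a nonconstant quasiregular map is open and proper on these pieces, each component $D$ of $V_k$ maps \emph{onto} the component of $V_{k-1}$ containing $f(D)$, so $f^k(D)=V_0$; since $N(f)$ is nonempty (it contains $\partial I(f)$, which is nonempty by Theorem \ref{FNext}) and completely invariant (Theorem \ref{list}), $D$ contains an $f^k$-preimage of a point of $N(f)$, and that preimage lies in $N(f)$. With this inserted your argument closes. It is worth noting that the paper avoids the issue entirely by building the nested pieces from inverse branches rather than from global preimages: it takes a Jordan domain $\Omega_1\supset N(f)$ whose exterior absorbs the forward orbit of the branch set, so that $f^{-M}$ has $2^M$ single-valued branches $h_j$ on $\Omega_1$, and the generation-$(n+1)$ pieces $h_j(U_k)$ sit inside the generation-$n$ pieces $h_j(\Omega_1)$ by construction; the nesting that you have to prove is automatic there.
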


\begin{proof}
Since $B(f) \subset I(f)$, the forward orbit of $B(f)$,
\begin{equation*}
B^{+}(f) = \bigcup _{j=1}^{\infty} f^{j}(B(f))
\end{equation*}
accumulates only at infinity. Hence there exists a simple closed curve $C$ such that the interior $\Omega _{1}$ of $C$ contains $N(f)$ and the exterior $\Omega _{2}$ contains $B^{+}(f)$.
Since $\Omega _{2}$ is a neighbourhood of infinity and contained in $I(f)$,
\begin{equation*}
I(f)= \bigcup _{j=1}^{\infty} f^{-n} ( \Omega _{2})
\end{equation*}
and hence there exists $M$ such that $f^{M}(B(f) \cup \Omega_{2}) \subset \Omega _{2}$.
This implies that $f^{-M}$ has $2^{M}$ single valued locally injective quasiregular branches on $\Omega _{1}$, denoted by $h_{j} $ for $j=1,...,2^{M}$.

Then the sets $\{ U_{j} = h_{j}(\Omega _{1})\}$ are pairwise disjoint, and we call this collection of sets $X_{1}$. Each $U_{j}$ is compactly contained in $\Omega_{1}$ and contains $2^{M}$ images
$\{U_{j,k} = h_{j}(U_{k}) \}$ for $k=1,...,2^{M}$, and we denote the collection of such sets over all $j,k$ by $X_{2}$. 
One can inductively define the collection $X_{n}$ for every $n \in \N$, and by construction, 
\begin{equation*}
N(f) = \bigcap _{n=1}^{\infty} X_{n}.
\end{equation*}
Since $X_{n+1} \subset X_{n}$, we see that $N(f)$ is infinitely connected.
\end{proof}

For quadratic polynomials, if $I(f)$ contains $B(f)$, then $\partial I(f)$ is totally disconnected, i.e. every connected component is a point. This is not necessarily the case for $f \in QA$, as suggested by Figure \ref{0.8K_-1.1,0.003icfig} for $K=0.8$ and $c=-1.1+0.003i$, where each connected component appears to be a continuum. One can check that $0 \in I(f)$ by using the condition in Theorem \ref{thm6.3} below, and so $\partial I(f)$ really is disconnected, by Theorem \ref{infconn}, as claimed.

\begin{figure}
  \caption{The boundary of $I(f)$ for $K=0.8$, $\theta = 0$ and $c=-1.1+0.003i$}
  \label{0.8K_-1.1,0.003icfig}
  \begin{center}
    \includegraphics{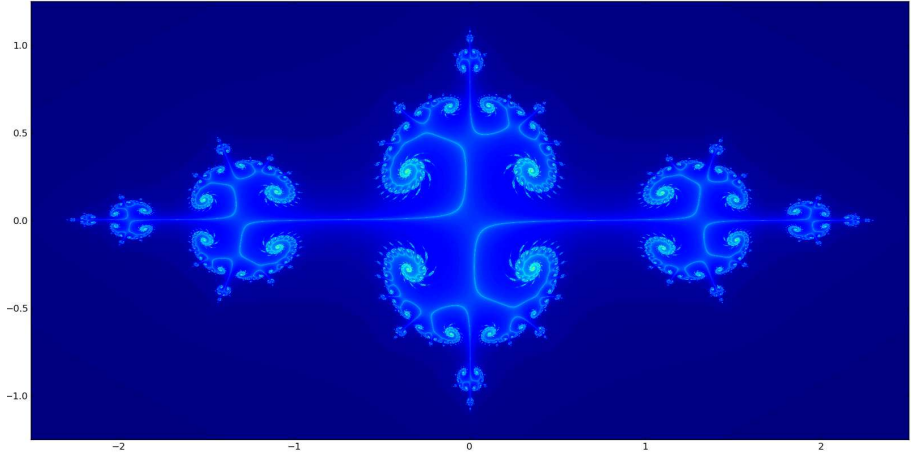}
  \end{center}
\end{figure}

\section{Parameter space}

For each pair $(K,\theta)$ such that $Ke^{i\theta} \in U$, we can consider the parameter space obtained by varying $c$ over functions in $QA$. We are led to the following definition.

\begin{definition}
Let $Ke^{i\theta} \in U$, and $f_{c} \in QA$ with decomposition $f_{c}=g_{c} \circ h$ where
$h= h_{K,\theta}$ and $g_{c}(z) = z^{2}+c$ for $c \in \C$. 
The $(K,\theta)$-Mandelbrot set $\mathcal{M}_{K,\theta}$ is defined be the set of those $c \in \C$ for which $0 \notin I(f_{c})$.
\end{definition}

\begin{theorem}
We have the following characterisation of $\mathcal{M}_{K,\theta}$:
\begin{equation*}
\mathcal{M}_{K,\theta} = \{ c \in \C : \partial I(f_{c}) \text{ is connected} \}.
\end{equation*}
\end{theorem}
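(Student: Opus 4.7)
The plan is to combine Theorem~\ref{QAconn} with a purely topological equivalence between connectedness of $N(f_c)$ and connectedness of $\partial I(f_c)$. The starting observation is that for $f_c \in QA$ the branch set is $B(f_c)=\{0\}$: the quasiconformal homeomorphism $h_{K,\theta}$ contributes no branch points, while $w\mapsto w^2+c$ has its unique branch point at $0 = h_{K,\theta}(0)$. Consequently $c\in\mathcal{M}_{K,\theta}$ if and only if $0\notin I(f_c)$, if and only if $I(f_c)\cap B(f_c)=\emptyset$, which by Theorem~\ref{QAconn} is equivalent to $N(f_c)$ being connected. The task therefore reduces to proving that $N(f_c)$ is connected if and only if $\partial I(f_c)$ is connected.

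For the forward direction, when $I(f_c)\cap B(f_c)=\emptyset$ the argument given in the proof of Theorem~\ref{QAconn} exhibits $I(f_c)\cup\{\infty\}$ as an increasing union of simply connected domains $U_j\cup\{\infty\}\subset\overline{\C}$, and hence itself simply connected. Applying the unicoherence of the $2$-sphere $\overline{\C}$ to the closed connected sets $\overline{I(f_c)\cup\{\infty\}}$ and $N(f_c)$, whose union is $\overline{\C}$, shows that their intersection $\partial I(f_c)$ is connected.

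For the reverse direction I would argue contrapositively. Suppose $N(f_c)$ is disconnected, and write $N(f_c)=A\sqcup B$ with $A,B$ nonempty, disjoint, and closed. Since $N(f_c)$ is compact, $\delta:=\mathrm{dist}(A,B)>0$. As $A$ is a proper nonempty closed subset of the connected space $\overline{\C}$, its boundary $\partial A$ is nonempty; for any $x\in\partial A$, a neighborhood of radius less than $\delta$ contains no points of $B$, so $x$ must be a limit of points in $I(f_c)$. This gives $\partial A\subset\partial I(f_c)$, and symmetrically $\partial B\subset\partial I(f_c)$. Then $\partial I(f_c)=(\partial I(f_c)\cap A)\sqcup(\partial I(f_c)\cap B)$ is a disjoint union of two nonempty closed sets separated by distance $\delta$, so $\partial I(f_c)$ is disconnected.

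The main obstacle is pinpointing the right piece of sphere topology to bridge the gap between Theorem~\ref{QAconn} (phrased in terms of $N(f_c)$) and the present statement (phrased in terms of $\partial I(f_c)$): the forward implication relies on unicoherence of $\overline{\C}$, equivalently the fact that the boundary of a simply connected subdomain of $\overline{\C}$ is connected, and the converse relies on the observation that disjoint compact pieces of $N(f_c)$ lie at positive distance and therefore induce a separation of $\partial I(f_c)$. Once these tools are in place the result is a short corollary of Theorem~\ref{QAconn}, with no further input specific to the family $QA$ required.
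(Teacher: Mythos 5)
Your proof is correct and follows essentially the same route as the paper: reduce to Theorem \ref{QAconn} via the observation that $B(f_c)=\{0\}$, then pass from connectedness of $N(f_c)$ to connectedness of $\partial I(f_c)$. The only difference is that you supply the topological details of that last equivalence (unicoherence of $\overline{\C}$ in one direction, the positive-distance separation argument in the other) which the paper compresses into a one-line appeal to Theorem \ref{list} and the fact that $I(f_c)$ is a connected neighbourhood of infinity.
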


\begin{proof}
By the canonical form for functions in $QA$, the only branch point of $f_{c}$ is $0$. By Theorem \ref{QAconn}, $0 \notin I(f_{c})$ if and only if $I(f_{c}) \cap B(f_{c})= \emptyset$ if and only if $N(f_{c})$ is connected if and only if $\partial I(f_{c})$ is connected. The last equivalence follows from Theorem \ref{list} and the fact that $I(f_{c})$ is a connected neighbourhood of infinity.
\end{proof}

\begin{figure}
  \caption{$(K,0)$-Mandelbrot sets for $K=0.7$ to $K=1.2$ in $0.1$ increments.}\label{kmandelbrotfig}
  \begin{center}
    \includegraphics{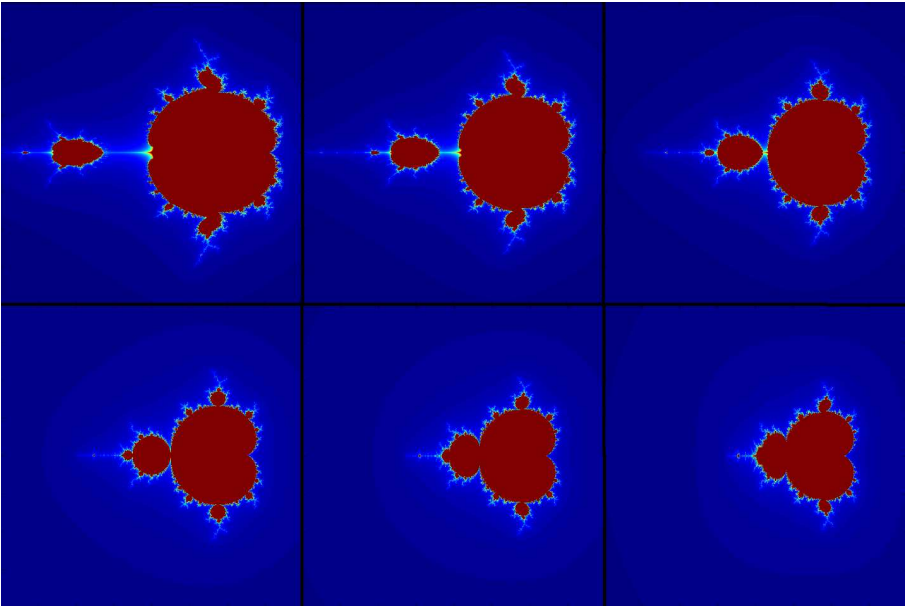}
  \end{center}
\end{figure}

\begin{figure}
\caption{$\mathcal{M}_{0.7,\pi/12}$}\label{M0.7_pi-12fig}
\begin{center}
\includegraphics{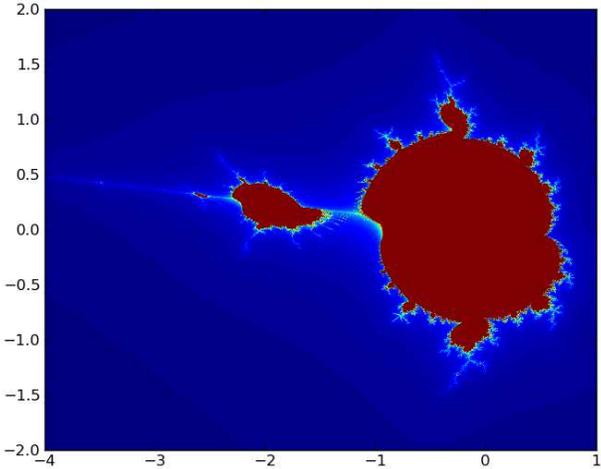}
\end{center}
\end{figure}

As is well-known, the Mandelbrot set $\mathcal{M}_{1,0}$ is contained in the disk $\{ \av c \av \leq 2\}$. We will show that the $K$-Mandelbrot set is also a bounded set. 

\begin{theorem}
\label{thm6.3}
Let $Ke^{i\theta} \in U$. Then
\begin{equation*}
\mathcal{M}_{K,\theta} \subset \{ c \in \C : \av c \av \leq 2 L_{1}^{-2} \},
\end{equation*}
where $L_{1} = \min \{K, 1 \}$.
Further, $\mathcal{M}_{K,\theta}$ is compact and can be characterised as the set of $c \in \C$ for which $f_{c}^{n}(0) 
\leq 2 L_{1}^{-2}$ for all $n \in \N$.
\end{theorem}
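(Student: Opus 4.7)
The plan is to adapt the classical escape-radius argument for the Mandelbrot set of $z\mapsto z^2+c$. The key ingredient is a sharp lower bound on $|h_{K,\theta}(z)|$: applying the reverse triangle inequality to formula (\ref{eq3.4a}), and observing that $\tfrac{K+1}{2}\geq\tfrac{|K-1|}{2}$ for every $K>0$, yields
\begin{equation*}
|h_{K,\theta}(z)|\;\geq\;\frac{(K+1)-|K-1|}{2}\,|z|\;=\;L_1\,|z|,
\end{equation*}
where $L_1=\min\{K,1\}$. (This sharpens the lower bound of Lemma \ref{hlemma} when $K>1$.) Since $f_c(z)=h_{K,\theta}(z)^2+c$, this immediately gives the core estimate $|f_c(z)|\geq L_1^2|z|^2-|c|$.

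Next I would establish the following \emph{escape lemma}: if $|w|\geq|c|$ and $|w|>R:=2L_1^{-2}$, then $|f_c^n(w)|\to\infty$. Under these hypotheses the core estimate gives $|f_c(w)|\geq L_1^2|w|^2-|w|=|w|(L_1^2|w|-1)$, and since $|w|>R$ the factor $\lambda:=L_1^2|w|-1$ strictly exceeds $1$; in particular $|f_c(w)|\geq|w|\geq|c|$ and $|f_c(w)|>R$, so the hypotheses persist along the orbit. The multiplicative ratio $|f_c^{n+1}(w)|/|f_c^n(w)|$ is then bounded below by $\lambda>1$ for every $n$, giving at least geometric growth and hence $|f_c^n(w)|\to\infty$.

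The rest of the theorem falls out cleanly. Taking $w=f_c(0)=c$ in the escape lemma shows that $|c|>R$ forces $c\notin\mathcal{M}_{K,\theta}$, which is the first inclusion. For the characterization, one direction is trivial; conversely, if $c\in\mathcal{M}_{K,\theta}$ then $|c|\leq R$ by the inclusion, and if some iterate satisfied $|f_c^m(0)|>R$ then $w:=f_c^m(0)$ would meet the hypotheses of the escape lemma (using $|w|>R\geq|c|$), contradicting $c\in\mathcal{M}_{K,\theta}$. Compactness then follows because the characterization realizes $\mathcal{M}_{K,\theta}=\bigcap_{n\geq 0}\{c\in\C:|f_c^n(0)|\leq R\}$ as an intersection of closed sets (since $h_{K,\theta}$ does not depend on $c$, the map $c\mapsto f_c^n(0)$ is continuous), while boundedness comes from the first inclusion. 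The only real subtlety is in the escape lemma, where one must verify genuine divergence rather than mere monotone increase; this is handled by the geometric-ratio observation above.
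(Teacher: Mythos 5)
Your proposal is correct and follows essentially the same route as the paper: the reverse triangle inequality applied to $f_c = h^2 + c$ together with the lower bound $\av h(z)\av \geq L_1 \av z\av$ from Lemma \ref{hlemma}, yielding the escape radius $2L_1^{-2}$ and then compactness as an intersection of closed sets. Your single ``escape lemma'' with the geometric-ratio argument merely repackages (a little more cleanly) the paper's two separate inductions --- the doubly-exponential bound $\av c\av (L_1^2\av c\av -1)^{2^{n-2}}$ for the inclusion and the $\epsilon$-tracking induction for the characterisation --- so there is nothing substantive to distinguish the two proofs.
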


\begin{proof}
Fix $(K,\theta)$ such that $Ke^{i\theta} \in U$, and recall from Lemma \ref{hlemma} that $h = h_{K,\theta}$ satisfies $\av h(z) \av \geq L_{1} \av z \av$ for all $z \in \C$, where $L_{1} = \min \{K,1\}$.
Assume that $L_{1} ^{2}\av c \av  > 2$. Then $\av f(0) \av = \av c \av$ and
\begin{equation*}
\av f^{2}(0) \av = \av h(c)^{2} + c \av \geq \av c \av (L_{1} ^{2} \av c \av - 1).
\end{equation*}
For $n \geq 2$, assume that 
\begin{equation*}
\av f^{n}(0) \av \geq \av c \av (L_{1}^{2} \av c \av - 1)^{2^{n-2}}. 
\end{equation*}
Then
\begin{equation*}
\av f^{n+1}(0) \av \geq \av f ( f^{n}(0)) \av \geq \av h( f^{n}(0))^{2} + c \av
\end{equation*}
\begin{equation*}
\geq L_{1} ^{2}( \av c \av (L_{1} ^{2} \av c \av - 1)^{2^{n-2}})^{2} - \av c \av \geq \av c \av ( L_{1} ^{2} \av c \av - 1)^{2^{n-1}}
\end{equation*}
when $L_{1} ^{2} \av c \av > 2$. Therefore, by induction, $\av f^{n}(0) \av \rightarrow \infty$ and $c$ is not in the $(K,\theta)$-Mandelbrot set.

For the second part of the theorem, assume that 
\begin{equation*}
\av f_{c}^{m}(0) \av > 2L_{1}^{-2} + \epsilon > 2L_{1}^{-2} 
\end{equation*}
for some $m \in \N$ and $\epsilon >0$.
First note if $\av c \av = \av f_{c}(0) \av > 2L_{1}^{-2}$ then $c \notin \mathcal{M}_{K,\theta}$. 
So if $\av c \av \leq 2L_{1}^{-2}$, then
\begin{equation*}
\av f_{c}^{m+1}(0) \av = \av (f^{m}_{c}(0))^{2} +c \av \geq  \left ( 2L_{1}^{-2} + \epsilon \right ) ^{2} - 2L_{1}^{-2}
\end{equation*}
\begin{equation}
\label{eq6.1}
\geq 2L_{1}^{-2} \left ( 2L_{1}^{-2} - 1 \right ) +4L_{1}^{-2} \epsilon.
\end{equation}
If $K \geq 1$, then $L_{1} = 1$ and (\ref{eq6.1}) implies that $\av f_{c}^{m+1}(0) \av  \geq 2+4\epsilon$. Therefore by induction,
\begin{equation*}
\av f_{c}^{m+k}(0) \av \geq 2 + 4^{k}\epsilon
\end{equation*}
and we can conclude that $0 \in I(f_{c})$ and so $c \notin \mathcal{M}_{K,\theta}$.
On the other hand, if $K <1$, then $L_{1}^{-2} >1$ and (\ref{eq6.1}) implies that
$\av f_{c}^{m+1}(0) \av \geq 2L_{1}^{-2} + ( 2L_{1}^{-2})^{2}\epsilon$ and so by induction,
\begin{equation*}
\av f_{c}^{m+k}(0) \av \geq 2L_{1}^{-2} + (2L_{1}^{-1})^{2k} \epsilon
\end{equation*}
and again $c \notin \mathcal{M}_{K,\theta}$.
This argument shows that the complement of $\mathcal{M}_{K,\theta}$ is open and so $\mathcal{M}_{K,\theta}$ itself is a compact set.
\end{proof}

See Figure \ref{kmandelbrotfig} for various Mandelbrot sets with $\theta = 0$. It may appear that a bulb is detached from the main cardioid for $K<1$, but the following theorem shows that, among other things, these two components of $\mathcal{M}_{K}$ are attached by a segment contained in $\R$. See also Figure \ref{M0.7_pi-12fig} for this effect. 

\begin{theorem}
If $\theta = 0$, then 
\begin{equation*}
\mathcal{M}_{K,0} \cap \R = \left [-\frac{2}{K^{2}},\frac{1}{4K^{2}} \right ].
\end{equation*} 
If $\theta \neq 0$, then there exists an angle $\phi_{0} \in [0,2\pi]$ and a real number $\eta$ such that the line segment
\begin{equation*}
te^{i\phi_{0}} \subset \mathcal{M}_{K,\theta},
\end{equation*}
for 
\begin{equation*}
t \in \left[ -\frac{2}{\eta},\frac{1}{4\eta} \right ].
\end{equation*}
\end{theorem}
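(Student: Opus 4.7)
The plan is to reduce both cases to the classical fact that $\mathcal{M}_{1,0} \cap \R = [-2, 1/4]$. In each case I will locate a line through the origin that contains $c$ and is forward-invariant under $f_c$, so that the orbit of the sole branch point $0$ is governed by a real quadratic in one real variable. A rescaling then converts that quadratic into the standard form $y \mapsto y^2 + c'$, whose critical orbit is bounded exactly when $c' \in [-2, 1/4]$.

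For $\theta = 0$, the stretch $h_{K,0}(x+iy) = Kx + iy$ preserves $\R$, so for real $c$ the map $f_c$ restricts on the real axis to $x \mapsto K^2 x^2 + c$. The linear conjugation $L(x) = x/K^2$ transforms this into $y \mapsto y^2 + K^2 c$. Since $c \in \mathcal{M}_{K,0}$ iff the orbit of $0$ under $f_c$ is bounded, we read off $\mathcal{M}_{K,0} \cap \R = [-2/K^2, 1/(4K^2)]$.

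For $\theta \neq 0$, parametrize a candidate invariant line as $L_{\phi_0} = \{se^{i\phi_0} : s \in \R\}$ and set $c = te^{i\phi_0}$. Using (\ref{eq3.4a}),
$$h_{K,\theta}(se^{i\phi_0}) = s e^{i\phi_0} A(\phi_0), \qquad A(\phi_0) := \tfrac{K+1}{2} + \tfrac{K-1}{2}\, e^{i(2\theta - 2\phi_0)},$$
so $f_c(se^{i\phi_0}) = s^2 e^{2i\phi_0} A(\phi_0)^2 + t e^{i\phi_0}$. The line $L_{\phi_0}$ is forward-invariant iff $e^{i\phi_0} A(\phi_0)^2 \in \R$, and I request the stronger condition $e^{i\phi_0} A(\phi_0)^2 = +|A(\phi_0)|^2$, which amounts to $\phi_0 + 2\arg A(\phi_0) \equiv 0 \pmod{2\pi}$. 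Setting $\eta := |A(\phi_0)|^2 > 0$, the induced dynamics on $L_{\phi_0}$ becomes $s \mapsto \eta s^2 + t$, and the conjugation $s = w/\eta$ turns this into $w \mapsto w^2 + \eta t$. Its critical orbit is bounded iff $\eta t \in [-2, 1/4]$, which gives exactly the claimed segment $t \in [-2/\eta, 1/(4\eta)]$.

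The one nontrivial step is to exhibit such a $\phi_0$. As $\phi_0$ varies, $A(\phi_0)$ traces the circle with center $(K+1)/2$ and radius $|K-1|/2$; since $K > 0$ this circle excludes the origin, so $\arg A(\phi_0)$ is well-defined and continuous in $\phi_0$, and is $\pi$-periodic because $A$ depends on $\phi_0$ only through $e^{-2i\phi_0}$. Hence $F(\phi_0) := \phi_0 + 2\arg A(\phi_0)$ is continuous with $F(2\pi) - F(0) = 2\pi$, and the intermediate value theorem produces $\phi_0 \in [0, 2\pi)$ with $F(\phi_0) \equiv 0 \pmod{2\pi}$. This winding argument, which relies essentially on $K \neq 1$ forcing the $A$-circle to avoid the origin, is where the only real work lies; everything else is direct computation.
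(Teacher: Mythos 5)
Your proof is correct, and for the $\theta=0$ case it is identical to the paper's. For $\theta\neq 0$ you take a genuinely different route to the key existence step. The paper writes $h_{K,\theta}$ in polar coordinates, shows that $f=h^2$ sends the ray $L_\phi$ to $L_{T(\phi)}$ with $T(\phi)=2\theta+2\tan^{-1}(\tan(\phi-\theta)/K)$, and solves $T(\phi)=\phi$ by reducing to a real cubic $P(t)$ in $t=\tan[(\phi-\theta)/2]$, locating a root in $[-1,1]$ via the sign change $P(\pm 1)=2(\pm 1-\tan(\theta/2))$, which uses the restriction $\theta\in[-\pi/4,\pi/4]$ coming from $Ke^{i\theta}\in U$. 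You instead run an intermediate value (winding) argument directly on the angle map $F(\phi_0)=\phi_0+2\arg A(\phi_0)$, using that the circle traced by $A(\phi_0)=\tfrac{K+1}{2}+\tfrac{K-1}{2}e^{i(2\theta-2\phi_0)}$ misses the origin so that $\arg A$ has a continuous, periodic branch and $F$ increases by $2\pi$ over a period. The two arguments find the same direction: your condition $\phi_0+2\arg A(\phi_0)\equiv 0 \pmod{2\pi}$ is exactly the paper's $T(\phi_0)=\phi_0$, and your $\eta=|A(\phi_0)|^2$ expands to $\tfrac{K^2+1}{2}+\tfrac{K^2-1}{2}\cos(2(\phi_0-\theta))$, which equals the paper's $1+(K^2-1)\cos^2(\phi_0-\theta)$. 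Your version buys two things: it avoids the quadrant/branch delicacies of the explicit $\tan^{-1}$ formula, and it does not need the bound $|\tan(\theta/2)|\leq\sqrt{2}-1$, so it would work for any $\theta$; you are also right to insist on the sign normalization $e^{i\phi_0}A(\phi_0)^2=+|A(\phi_0)|^2$, which is what guarantees the induced one-dimensional map is $s\mapsto\eta s^2+t$ with $\eta>0$ (the paper gets positivity of $\eta$ automatically from its formula). The paper's approach, in exchange, produces $\phi_0$ more explicitly as $\theta+2\tan^{-1}t_0$ for a computable root $t_0$. One small slip in your closing remark: the $A$-circle avoids the origin because $|K-1|<K+1$, i.e.\ because $K>0$, not because $K\neq 1$; when $K=1$ the circle degenerates to the point $1$ and the argument is trivial. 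This does not affect the proof.
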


\begin{proof}

If $K \in  \R^{+}$, $\theta=0$ and $c \in \R$, then $f_{c}^{n}(0) \in \R$ for all $n \in \N$. It is well-known that $\mathcal{M}_{1} \cap \R = [-2, 1/4]$, i.e. $0$ does not escape under iteration of $x^{2}+c$ only when $c \in [-2,1/4]$. Since $f_{c}(x) = K^{2}x^{2}+c$ for $x \in \R$, then this observation implies that 
\begin{equation*}
\mathcal{M}_{K,0} \cap \R = \left [-\frac{2}{K^{2}},\frac{1}{4K^{2}} \right ].
\end{equation*}

For the second part, assume that $\theta \neq 0$, $K>0$ and set
$h=h_{K,\theta}$ and $f=h^{2}$. Clearly $h$ maps rays emanating from $0$ to other such rays and has obvious fixed rays. We first show that $f$, which also maps rays to rays, has at least one fixed ray too.

One can calculate that $h$ is given, in polar coordinates, by
\begin{equation}
\label{eq6.4}
h(re^{i\varphi}) = 
r\left ( 1+  (K^{2}-1)\cos ^{2}(\varphi - \theta ) \right )^{1/2} \exp \left [ i \left (\theta + \tan ^{-1} \left ( \frac{\tan (\varphi - \theta )}{K} \right ) \right ) \right ].
\end{equation}
For $\varphi \in [0,2\pi]$, write $L_{\varphi}$ for the ray
$\{ te^{i\varphi}: t\geq 0 \}$. Then it is easy to see that $f$ maps the ray $L_{\phi}$ onto $L_{T(\phi)}$, where
\begin{equation}
\label{eq6.2}
T(\phi ) = 2\theta +2 \tan ^{-1} \left ( \frac{\tan (\phi - \theta )}{K} \right ).
\end{equation}
To show that $f$ has a fixed ray, we therefore need to find a solution $\phi _{0}$ to $T(\phi) = \phi$. Rearranging (\ref{eq6.2}), this means we need to solve
\begin{equation}
\label{eq6.3}
\tan \left ( \frac{\phi}{2} - \theta \right) = \frac{\tan (\phi - \theta )}{K}.
\end{equation}
Let $t=\tan [(\phi - \theta)/2]$. Then (\ref{eq6.3}) and the addition formula for the tangent function yield
\begin{equation*}
\frac{t - \tan (\theta /2)}{1-t \tan (\theta /2)} = \frac{2t}{K(1-t^{2})}.
\end{equation*}
Rearranging this equation, we need to find a zero of the cubic polynomial 
\begin{equation*}
P(t):= Kt^{3} - (2+K)\tan (\theta /2 ) t^{2} +(2-K)t + K\tan (\theta / 2).
\end{equation*}
Every cubic with real coefficients has a real zero, and we claim that $P(t)$ has a zero $t_{0}$ in $[-1,1]$. To see this, note that $P(1) = 2(1-\tan (\theta/2))$ and $P(-1) = 2(-1-\tan(\theta/2))$. Since
$\theta \in [-\pi /4, \pi /4]$, we have $\tan (\theta/2) \in [-\sqrt{2}+1,\sqrt{2}-1]$ and the claim follows.
Therefore, with
\begin{equation*}
\phi _{0} = \theta + 2 \tan ^{-1} t_{0},
\end{equation*}
$f$ fixes $L_{\phi_{0}}$.
Using (\ref{eq6.4}), we see that $f$ acts on the fixed ray $L_{\phi_{0}}$ by
\begin{equation*}
f(re^{i\phi_{0}}) = 
r^{2}\left ( 1+  (K^{2}-1)\cos ^{2}(\phi_{0} - \theta ) \right ) e^{i\phi_{0}}.
\end{equation*}
The final claim of the theorem follows with 
\begin{equation*}
\eta = \left ( 1+  (K^{2}-1)\cos ^{2}(\phi_{0} - \theta ) \right )
\end{equation*}
by using the same argument from the first part of the theorem.
We remark that if $\theta = 0$, then $\phi_{0}=0$ and $\eta = K^{2}$, agreeing with the first part of the theorem.
\end{proof}

We remark that the parameter space for mappings in $QA$ is actually two complex dimensional, since we have 
a different mapping for each pair $(Ke^{i\theta},c)\in U \times \C$. Denote by $\Omega \subset U \times \C$
the set of $(Ke^{i\theta},c)$ such that $f_{K,\theta,c}^{n}(0)$ is bounded.
Each $(K,\theta)$-Mandelbrot set
is a one dimensional slice of this larger parameter space $\Omega$. Similarly, one can consider one dimensional slices of $\Omega$ where $c$ is fixed, and $Ke^{i\theta}$ varies.
As an example, Figure \ref{c_-1.5fig} is the slice of $\Omega$ through $c=-1.5$. Note the expected rotational symmetry. Further, the slice $c=0$ is the whole of $U$. A natural question to ask is whether the set $\Omega$ is connected in $\C^{2}$, just as the Mandelbrot set is connected in the parameter space of quadratic polynomials?

\begin{figure}
\caption{$c=-1.5$ parameter slice}\label{c_-1.5fig}
\begin{center}
\includegraphics{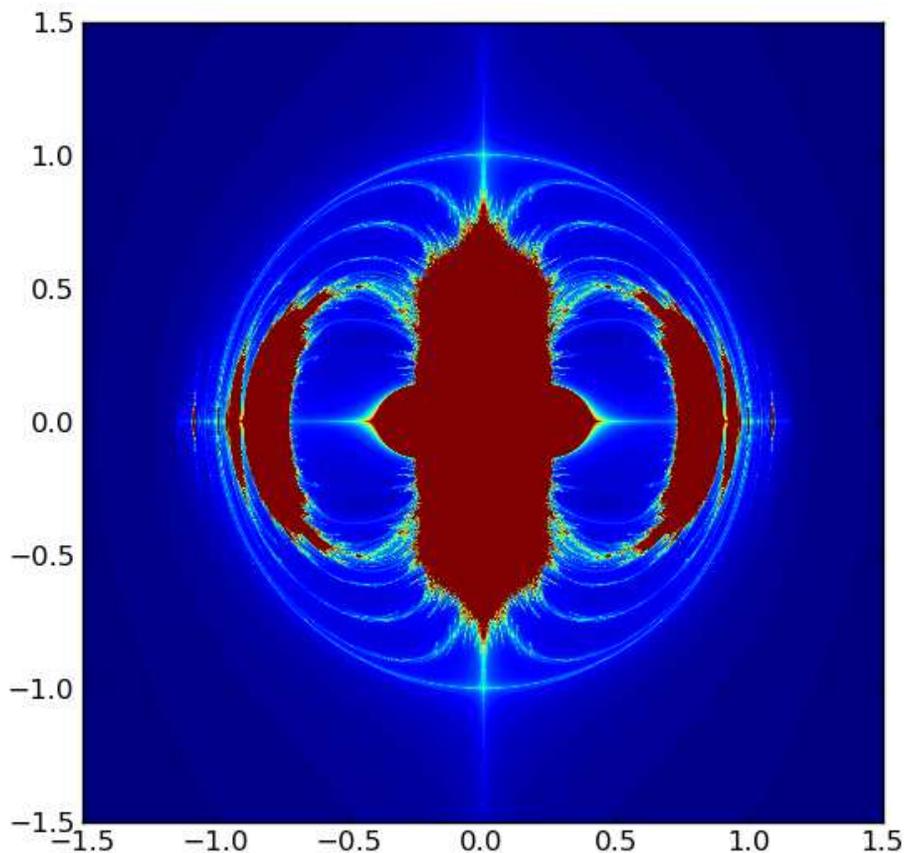}
\end{center}
\end{figure}

\small

Institute of Mathematics, University of Warwick, Coventry, CV4 7AL, UK. 

{\it Email address:} {\tt alastair.fletcher@warwick.ac.uk}

\medskip

Equipe Audition, D\'{e}partement d'Etudes Cognitives,
Ecole Normale Sup\'{e}rieure,
29 Rue d'Ulm,

75230, Paris, Cedex 05.

{\it Email address:} {\tt dan.goodman@ens.fr}

\end{document}